\newtheorem{theorem}{Theorem}
\newtheorem{lemma}{Lemma}
\newtheorem{remark}{Remark}
\newtheorem{definition}{Definition}
\newtheorem{corollary}{Corollary}
\newtheorem{conjecture}{Conjecture}
\newtheorem{ex}{Example}
\def\Dres{{\rm DRes}}
\def\Ker{{\rm Ker}}
\def\BC{{\tt BC}}
\def\m0{{\bf 0}}
\def\cE{{\mathcal{E}}}
\def\Keywords{{\bf Keywords:} }
\def\Classification{{\bf MSC : }}
\title{Burchnall-Chaundy polynomials for {matrix ODOs}\\ and {Picard-Vessiot Theory} }
\author{Emma Previato, Sonia L. Rueda and Maria-Angeles Zurro}
\begin{document}

\maketitle

\begin{abstract}
{Burchnall and Chaundy showed that if two ordinary differential operators (ODOs)  $P$, $Q$ with analytic coe\-ffi\-cients commute then there exists a polynomial $f(\lambda ,\mu)$ with complex coefficients such that $f(P,Q)=0$, called the BC-polynomial. This polynomial can be computed using the differential resultant for ODOs. In this work we extend this result to matrix ordinary differential operators, MODOs. Our matrices have entries in a differential field $K$, whose field of constants $C$ is algebraically closed and of zero characteristic. We restrict to the case of order one operators $P$, with invertible leading coefficient. We define a new differential elimination tool, the matrix differential resultant. We use it to compute the BC-polynomial $f$ of a pair of commuting MODOs, and we also prove that it has  constant coefficients. This resultant provides the necessary and sufficient condition for the spectral problem $PY=\lambda Y \ , \ QY=\mu Y$ to have a solution. Techniques from differential algebra and Picard-Vessiot theory allow us to describe explicitly isomorphisms between commutative rings of MODOs $C[P,Q]$ and a finite product of rings of irreducible algebraic curves. }
\end{abstract}

\Keywords{Matrix Ordinary Differential Operator; Differential Resultant; Picard-Vessiot Extension}

\Classification{13N10; 13P15; 14H70} 


\section{Introduction}\label{Introduction}

The main contribution of this paper is the construction of a new differential elimination tool, a differential resultant for matrix ordinary differential operators (MODOs). Our main goal is to prove that this tool provides  an effective criterion to guarantee  the solvability of the 
eigenvalue problem for commuting MODOs
\begin{equation}\label{eq-eigenProblem}
    L Y =\lambda Y \quad , \quad B Y = \mu Y \ .
\end{equation}
We restrict to the case where $L$ is monic and has order one, since 
according to Wilson \cite{wilson}, this situation is interesting because $L$ does have order $1$ in practically all the most interesting examples \cite{ZS}, see also \cite{Dubrovin}. {Furthermore, it should be noted that by means of the Cyclic Vector Lemma, there is a correspondence between systems of order $1$ and size $\ell \times \ell$ and differential operators of order $\ell$, see for example Churchill and Kovacic \cite{ChurchillKovasic}, or Katz \cite{Katz1987}.}

\medskip

Differential resultants were first defined for ordinary differential operators, as the natural generalization to a non commutative environment of the algebraic resultant of two univariate polynomials, see for instance \cite{Ch}. A few years ago the theory of differential resultants was formalized for multivariate differential polynomials and reviewed in two recent reports \cite{McW} and \cite{LY}. See also \cite{KasmanPreviato2001, KasmanPreviato2010} for differential resultants in the case of partial differential operators. We provide here the first definition of a differential resultant for MODOs. Every previously existing notion of differential resultant provides a condition on the coefficients of the differential operators (or differential polynomials) that guarantees the existence of common nontrivial solutions. The tool we  develop here  provides such a condition in the case of MODOs, but many difficulties emerge in trying to extend previous methods to the present situation. These are a consequence of two main facts:  matrix coefficient rings are non commutative and, in addition, rings of MODOs are not euclidean domains. 

\medskip

Throughout this paper we use the language of differential algebra and Picard-Vessiot theory.  For the main definitions and notation we recommend the fo\-llowing references \cite{Ritt}, \cite{Kolchin}, \cite{VPS}, \cite{Ch}, and \cite{MR1999}. The essential terminology has been summarized in the Appendix.
We consider the ring $\mathcal{R}_\ell$ of $\ell\times \ell$ matrices with entries in an ordinary differential field $K$, whose field of constants $C$ is algebraically closed. The derivation of $K$ can be extended to a derivation $D$ on $\mathcal{R}_\ell$. 
The MODOs of this paper belong to the ring of differential operators $\mathcal{R}_\ell [D]$, for details see Section \ref{sec-modos}.

\medskip

In this article, we define in Section \ref{sec-commonSol} (see Definition \ref{def-defDres}) the differential resultant $\Dres (P,Q)$ of two {  MODOs} $P$ and $Q$ in $\mathcal{R}_\ell [D]$, in the case where $P=A_0 +A_1 D$,  with $A_1$ invertible, and prove the following result. 

\ 

{\bf Theorem A. }{\it  
Given {  MODOs} $P$ and $Q$ in $\mathcal{R}_{\ell}[D]$, $P$ of order $1$ with invertible leading coefficient matrix. The following statements hold:
\begin{enumerate}
    \item If there exists a common nontrivial solution in some differential field extension of $K$ of $PY=\overline{0}$ and $QY=\overline{0}$ then $\Dres (P,Q)=0$.
    
    \item {If $P$ and $Q$ commute, and}  {if }  $\Dres (P,Q)=0$, then the matrix differential system $PY=\overline{0}$ , $QY=\overline{0}$, has a solution  $\psi = (\psi_1 , \dots ,\psi_{\ell} )^t$ with every $\psi_i$ in a differential extension of finite algebraic transcendence degree  $\Sigma$ of $K$.
\end{enumerate}
}

{These results motivate the definition of the differential resultant for MODOs in the general case, where $P$ has arbitrary order. For that purpose the Picard-Vessiot theory of MODOs needs further development that could benefit from  results about the factorization of MODOs 
 such as those obtained by A. Kasman in \cite{Kasman2017}.}

The classical problem of describing pairs of commuting differential operators was first studied
by Burchnall and Chaundy \cite{BC1}, \cite{BC2}, and Baker in \cite{Baker1928}, for ODOS. {The work of Grinevich in \cite{Grinevich1987} generalizes this problem to the case of MODOs, assuming that they are in so called {\sl general position} and for matrix coefficients in the field of analytic complex functions.} The effective construction of commuting matrix differential operators has been also addressed in Oganesyan's article \cite{Oganesyan2017}.

This paper contributes to a  generalization to the case of MODOs, of the famous results by Burchnall and Chaundy in \cite{BC1}, which establish a correspondence between rings of commuting ordinary differential operators and planar algebraic curves, see also \cite{K78}. For a commuting pair $L$, $B$ of ODOs, it is easy to observe the existence of a polynomial $h(\lambda,\mu)$ with constant coefficients such that $h(L,B)=0$: Burchnall and Chaundy showed that the opposite is also true \cite{BC1, BC2} in the specific situation they studied, see \cite{GZ} for recent contributions on this matter. This is the defining polynomial of a plane algebraic curve $\Gamma$, commonly known as {\sl the spectral curve}, and it can be computed by means of the differential resultant of $L-\lambda$ and $B-\mu$. {These facts are an important motivation to develop differential resultants for MODOs  to study the spectral problem \eqref{eq-eigenProblem}}. 

Next we consider $P= L-\lambda$ and $Q=B-\mu$ as MODOs with matrix coefficients in the differential field $K(\lambda ,\mu)$, for algebraic variables $\lambda$ and $\mu$, whose field of constants is $C(\lambda,\mu)$. 
We generalize Previato's theorem on differential resultants for ODOs, see \cite{Prev} and \cite{MRZ1}, to prove the next analogous result for MODOs, {showing that the differential resultant of $L-\lambda$ and $B-\mu$ is a polynomial with differentially constant
coefficients.}

\medskip

{\bf Theorem B. }{\it 
 \ Let us consider matrix differential operators $L$ and $B$ in $\mathcal{R}_{\ell}[D]$, and assume that $L$ has order one with invertible leading coefficient. 
If $L$ and $B$ commute then the  differential resultant  
\begin{equation}\label{def-spCurve}
    f(\lambda ,\mu) = \Dres(L-\lambda,B-\mu)
\end{equation}
is a polynomial in $C [\lambda ,\mu ]$.
}

\medskip

Coming back to Theorem A, for a commuting pair $L$, $B$, the spectral problem \eqref{eq-eigenProblem} has a nontrivial solution, for $\lambda=\lambda_0$ and $\mu=\mu_0$, if and only if $f(\lambda_0,\mu_0)=0$, see Corollary \ref{cor-coupled}. Thus \eqref{eq-eigenProblem} is in fact a coupled problem since $\lambda$ and $\mu$ are not free parameters. The algebraic curve $\Gamma$ defined by $f(\lambda,\mu)=0$,
{ which guarantees the solvavility} of the eigenvalue problem,  is the so-called {\sl spectral curve}, see \cite{Grinevich1987}. 
{In other words,  each point of $\Gamma$ provides a spectral problem associated to a pair of commuting operators, $L, B$ that admits a common solution.}

\medskip

The present work studies the so called {\sl direct problem} for commutative algebras $C[L,B]$
associated with a commuting pair of MODOs $L, B$, {for the first time} in the case of  an arbitrary differential field $K$. More precisely, given $L,B$ in $\mathcal{R}_\ell [D]$, assuming that $L$ has order one and invertible leading coefficient matrix, we prove in Theorem C a decomposition theorem for the algebra $C[L,B]$ in terms of the irreducible components of the spectral curve in this context. The tool we develop, the differential resultant for MODOs, plays a crucial role. It is important to note that we do not restrict to the case of irreducible curves or nonsingular curves. Working in an arbitrary differential field and using Picard-Vessiot theory allows to {reduce} the hypothesis on the leading coefficient of $L$, in comparison with previous works, see for instance \cite{Grinevich1987}. Similar benefits are expected for $L$ of arbitrary order, after an appropriate differential resultant for MODOs is defined.

We define the {\sl Burchnall-Chaundy ideal of the pair $L,B$} to be the set 
\[
\BC (L,B) :=\{g\in C[\lambda,\mu]\mid g(L,B)=\m0\},
\]
whose elements are {\sl Burchnall-Chaundy (BC) polynomials}, in analogy with the theory of ODOs. We assume that the differential resultant $f$ is a $\BC$ polynomial, being this assumption very likely to happen as explained in Remark \ref{rem-BCpol}. We present this fact as a conjecture, for $K$ an arbitrary differential field, in Section \ref{sec-BC}. Moreover, once $f$ has been computed, its decomposition in irreducible factors $f=h_1^{\sigma_1 } \cdots h_s^{\sigma_s}$ will allow us to give the Algorithm \texttt{BC-generator} to compute a polynomial $F$ such that $\BC(L,B)=(F)$.

We establish the ring structure of the commutative algebra $C[L,B]$ by means of an isomorphism 
$$C[L,B]\simeq\frac{C[\lambda , \mu ] }{\BC(L,B)}.$$ 
{The next decomposition theorem is by itself important and allows to classify the commutative algebras $C[L,B]$ as products of quotient rings related with the irreducible components $\Gamma_i$ of the spectral curve $\Gamma$. }

\ 

{\bf Theorem C. }{\it 
\ Let us consider commuting matrix differential operators $L$ and $B$ in $\mathcal{R}_{\ell}[D]$, and assume that $L$ has order one with invertible leading coefficient. Let $f(\lambda ,\mu) = \Dres(L-\lambda,B-\mu)$ and assume that $f(L,B)=\m0$.
Then there exists a polynomial $F=h_1^{r_1 } \cdots h_s^{r_s}$ that divides $f$ such that $\BC (L,B) =(F)$. Furthermore the following isomorphism can be established
\begin{equation}\label{eq-decomposition}
C[L,B]\simeq 
\frac{C[\lambda , \mu ] }{(h_1^{r_1 })}
\times \cdots \times 
\frac{C[\lambda , \mu ]}{(h_s^{r_s })}    ,
\end{equation}
whose ring structure is componentwise
addition and multiplication.
}

{
These results can be used to classify the commutative algebras $C[L,B]$ in terms of the irreducible components of the spectral curve, more precisely the irreducible factors of its defining polynomial $f$, the differential resultant. In particular, if $\Gamma$ is an irreducible curve then 
\[C[L,B]\simeq \frac{C[\lambda , \mu ]}{(h^{r})}\]
where $h$ is is the unique irreducible factor of $f=h^{\sigma}$, $1\leq r\leq \sigma$. For matrix coefficients of size $\ell=2$ the classification is then clear and can be applied to pairs of operators $L$ and $B$ defining the famous AKNS hierarchy. We finish this paper illustrating our results with a computed example, the first non trivial case of the AKNS hierarchy, since $f(L,B)=\m0$ in this case. All computations were performed in \href{https://www.maplesoft.com/}{Maple} 21.
}

\bigskip

{\it The paper is organized as follows}.  Section 2 includes important notation and definitions, together with the context of the problems studied in relation to the work of other authors {using different approaches}. Section 3 is dedicated to the construction of the differential resultant for MODOs and the proof of Theorem A. The spectral curve $\Gamma$ is defined in Section 4 after proving Theorem B.  {It is shown that problem \eqref{eq-eigenProblem}  is a coupled spectral problem, that admits a solution whenever $\lambda=\lambda_0$ and $\mu=\mu_0$ are related through $f(\lambda_0,\mu_0)=0$}. It is in Section 5 that the ideal of BC polynomials $\BC (L, B)$ is canonically associated to the operators $L, B$. In Theorem \ref{thm-ideal1} we prove that the differential resultant $f(\lambda,\mu)=\Dres(L-\lambda,B-\mu)$ provides a MODO $f(L,B)$, which is zero on the solution space of $L-\lambda$. Under the assumption $f(L,B)=\m0$, in Theorem \ref{thm-ideal} it is proved that the ideal $\BC(L,B)$ is bounded by ideals defined by $f$. Section 6 contains the proof of Theorem C and the Algorithm \texttt{BC-generator} to compute its decomposition as a product of rings associated with irreducible curves. Finally, we illustrate our results in Section 7 by applying them to the AKNS hierarchy, where all hypothesis are fulfilled. 

\medskip

Professor Emma Previato passed away on June 29, 2022 while we were preparing the final version of this article. This work has been carried out under her constant inspiration even after her untimely passing. We sincerely wish her rest in peace, and we fulfill her wish that this work be published in the special volume in honor of H. Flaschka.

\section{Matrix coefficient ODOs}\label{sec-modos}

Let K be a differential field  with derivation $\partial$, whose field of constants $C$ is algebraically closed of characteristic zero. Given $a\in K$ we denote $\partial(a)$ by $a'$. The commutation rule in the ring of differential operators $K[\partial]$ is then defined by $\partial a =a\partial+a'$.  

Let us consider the ring $\mathcal{R}_\ell =M_{\ell}(K)$ of $\ell\times \ell$ matrices with coefficients in $K$. Given $A=(a_{\alpha,\beta})\in\mathcal{R}_\ell $, let us denote by $A'=(a'_{\alpha,\beta})$. Thus we can extend the derivation $\partial$ to a derivation $D$ in $\mathcal{R}_\ell$ as $D(A):=A'$. We will work with  matrix coefficient differential operators as elements of $\mathcal{R}_\ell [D]$, called \emph{ matrix ordinary  differential operators} or MODOs, where the commutation rule is naturally defined by $D A:=AD+A'$. In addition, we will denote by $\m0$ the matrix with all  entries equal to $0$. {Observe that we can identify $\mathcal{R}_\ell [D]$ with $M_{\ell}(K[\partial])$.}

\medskip

The ring of scalar differential operators $K[\partial]$ is embedded in the ring of matrix differential operators $\mathcal{R}_\ell [D]$ by sending a scalar differential operator $\sum a_i \partial^i$ to the matrix differential operator $\sum a_i I_{\ell} D^i$, for the identity matrix $I_{\ell}$ of $\mathcal{R}_\ell $.
In the scalar case, ODOs enjoy many properties that are essential to the classification of commutative rings of ODOs. In the matrix-coefficient case, it is not known whether such properties hold. We list the main ones: 
\begin{enumerate}[label= \Roman* ]
\item \label{item-I}\label{property-P1} The commutator of two operators of orders $m$ and $n$ has order strictly less than $n+m$. 

\item \label{item-II} If two operators commute with an operator $L$ of order at least $1$, then they commute among themselves{, since the centralizer of $L$ is a commutative ring}, \cite{Good}. \label{property-P2}
\end{enumerate}

Observe that, in the matrix case, Property \ref{property-P1} does not hold. For a counterexample, let us consider operators $MD,\ ND$, where $M,\ N$ are two constant non-commuting matrices
\[[MD,ND]=[M,N]D^2+(N'-M')D.\]
One might wonder whether in the algebraic case 
the property might hold; in fact, the assumption in \cite{wilson} is that the leading coefficients are diagonal. In addition, Property \ref{property-P2} fails. In fact, $MD,\ ND$ as above both commute with $I_{\ell} D$.

These observations bring out the intrinsic interest of studying commutative subrings of 
matrix differential operators; {however, given the difficulties that arise, some special assumptions are usually considered. We review below some of the existing literature to highlight the contributions of the present paper.}

\medskip


Observe that the ring $\mathcal{R}_\ell [D]$ of MODOs is included in the algebra  of pseudo-differential operators with matrix coefficients
$$\mathcal{R}_\ell [D ,D^{-1}]:=\left\{\sum_{i=-\infty}^n A_i D^i\mid A_i\in \mathcal{R}_\ell, n\in\mathbb{Z}\right\}.$$
For a pseudo-differential operator $L=\sum_{i=-\infty}^n A_i D^i $, we call $A_n$ its leading coefficient, whenever it is non-zero, and $n$ is called its order. Furthermore, we will say that L is in normal form if $A_{n-1}=0$.

\vspace{0.5cm}

The above construction has been generalized as follows. Given 
$
L=\sum_{i=0}^n  U_i D^i  ,    
$
with $U_i =\displaystyle\left( u_{i, \alpha \beta} \right)_{1\leq \alpha , \beta \leq l}$, if we consider $u_{i,\alpha\beta}$ as differential variables over $C$, we can define the ring of differential polynomials
$$
\mathcal{B}=C\{u_{i,\alpha\beta}\}={C}\left[u^{(j)}_{i,\alpha\beta} \, ; \,  1\le \alpha, \beta\le l, \ 0\le i\le n, \  j\ge 0  \ \right].
$$ 
In \cite{wilson}, Wilson studied {the centralizer $\mathcal{Z}(L)$ of $L$} in $M_l (\mathcal{B})[D ,D^{-1}]$, and it is in this algebra that some of the properties of scalar ODOs persist. In \cite{wilson}, Proposition 2.19, he shows  that $\mathcal{Z}(L)$ is commutative, i.e. that Property \ref{property-P2} holds under the following assumptions:
\begin{enumerate}[label=(\alph*)]
    \item \label{item-a} The leading coefficient $U_n$ is an invertible diagonal matrix,  diag$ (c_1, \ldots ,c_k)$, the $c_\alpha$ being non-zero constants.

    \item \label{item-b} If $c_\alpha= c_\beta$, then $u_{n-1,\alpha\beta} = 0$.
\end{enumerate}
{As Wilson writes, one can conjugate $L$ into its leading term by a suitable `integral operator' (formula (5.1) in \cite{wilson}). This is what gives the affine ring of a curve.
}

\medskip


In Mulase et al. \cite{MuKi}, a correspondence is established between integral algebras of MODOs ({those containing an operator in normal form})  and geometric data {related to an algebraic curve}, assuming that the differential field $K$  is $C((x))$, the field of  formal Laurent series in the variable $x$ with coefficients in $C$. In this framework, there exists a classification of commutative elliptic algebras of MODOs, in the Verdier sense \cite{Ve}. In Mulase et al. \cite{MuLi} an equivalence of categories provides such  a classification: On one hand the category of algebras of commuting MODOs; on the other, coverings of algebraic curves (spectral curves) together with some associated geometric data.

This type of rings occurred in the study of the so-called {\sl inverse spectral problem}  for an algebraic curve with some extra geometric data. The problem was initially studied by Krichever \cite{krichever}. He established a method to use MODOs to construct a pair of matrix operators whose spectral curve is some given curve $\Gamma$, based on the configuration of its points at infinity. {One can observe that the matrices associated to a curve by Krichever's construction are of Wilson's type, \cite{previato}.}

\medskip


Given $L \in  \mathcal{R}_\ell [D]$, 
{one can consider the centralizer $\mathcal{C}(L)$ of $L$ in $ \mathcal{R}_\ell [D]$ or the centralizer 
$Z(L)$ of $L$ in $\mathcal{R}_\ell [D ,D^{-1}]$.
In this paper we study the algebras $C[L,B]$, for $B$ in $\mathcal{C}(L)$. One first observation is that the algebra $C[L,B]$} is not in general a maximal commutative subalgebra of {$\mathcal{R}_\ell [D]$} 
. The following inclusion
\[
C[L,B] \subset \mathcal{C}(L)=\mathcal{R}_\ell [D] \cap Z(L).
\]
in general proper, suggests the interest in studying  the centralizer $\mathcal{C} (L)$ of $L$ in $\mathcal{R}_\ell [D]$.
{The results of K. Goodearl in \cite{Good} concerning centralizers of ODOs have proven to be important for the effective computational approaches in  \cite{PRZ2019, MRZ1, MRZ2}.}
{In this sense, a generalization to the case of MODOs of Goodearl's results would give an effective description of  $\mathcal{C} (L)$, but we emphasize that we do not pursue such a description  in this paper. }

\medskip

{In this work,} as in the scalar case, we will use the concept of differential resultant to establish a correspondence between pairs of commuting matrix differential operators $L$, $B$ and algebraic curves \cite{BC1}. {More precisely, we will assign to each commutative algebra $C[L,B]$ a plane algebraic curve, the \emph{spectral curve}.} {This is the goal of the second part of this paper from Section \ref{sec-spectral curves} onward}.
For this purpose, let us consider algebraic variables $\lambda$ and $\mu$ with respect to $\partial$. To complete this brief overview of the general situation for MODOs, we would like to indicate that by using Picard-Vessiot extensions, we obtain a representation of the centralizer $\mathcal{C} (L)$. Let $\cE$ be a Picard-Vessiot field of the  differential equation $LY =\lambda Y$, for details see Remark \ref{rem-PVE}. Since $L-\lambda=L-\lambda I_{\ell}$ and $B-\mu=B-\mu I_{\ell}$ commute, the operator $B-\mu$ acts linearly on the solution space of $L-\lambda$, and this action provides an $\ell\times \ell$ matrix $M(L-\lambda ,B-\mu )$, as constructed in Section \ref{sec-spectral curves}. Moreover, the null space of  $M(L-\lambda ,B-\mu )$ provides a representation of the space of common solutions at each point of the spectral curve.

\medskip

{
It was E. Previato who glimpsed for the first time  the power of a triple approach combining differential algebra, Picard-Vessiot extensions and representation theory, in her important work \cite{Previato2008} on the generalization of Burchnall and Chaundy's ideas to ODOs in several variables.  The present work is written in this philosophy, uniting these techniques for the study of coupled spectral problems for MODOs. 
}

\section{Characterizing common solutions}\label{sec-commonSol}

{
In this section we present the construction of the main  tool that allows for our characterization of the existence of common solutions for MODOs. This tool is the differential resultant and we define it under some hypotheses.}
Let us consider matrix differential operators in $\mathcal{R}_{\ell}[D] $,
\begin{equation}\label{def-operators}
P=A_0+A_1 D     \quad \textrm{and} \quad Q=\sum_{j=0}^n B_j D^j , \ \textrm{where } n\geq 1\ ,
\end{equation}
with $A_i$, $B_j$  in the differential ring $\mathcal{R}_{\ell}$. We will assume that the leading  coefficient matrix $A_1$ of $P$ is invertible, allowing for the consideration of a monic operator. 

In this section we will investigate necessary and sufficient conditions on the entries of the coefficient matrices of $P$ and $Q$ for the system 
\begin{equation}\label{eq-system}
    \left\{\begin{matrix}
    PY=\overline{0}\\
    QY=\overline{0}
    \end{matrix}\right. \quad ,\,\,\, Y=(y_1,\ldots ,y_{\ell})^t, \ \overline{0}= (0,\dots ,0)^t ,
\end{equation}
to have a nontrivial solution $\psi = (\psi_1 , \dots ,\psi_{\ell} )^t$, with all the $\psi_i$ in some differential extension $\Sigma$ of the differential field $K$. To guarantee the existence of such an extension, we will associate to system \eqref{eq-system} a differential field, {the classical Picard-Vessiot field extension of $K$ for the system $PY=\overline{0}$, a differential extension of finite algebraic transcendence degree 
containing the entries of a fundamental solution matrix  \cite{VPS}.}

First, observe that the differential equation $PY=\overline{0}$ can be rewritten as
\begin{equation}\label{eq-PV-system}
    DY= N Y \quad \textrm{with } \ N=  -A_1^{-1} A_0 \in \mathcal{R}_{\ell} .
\end{equation}
Next consider the matrix differential recursion in $\mathcal{R}_{\ell}$:
\begin{equation}\label{eq-rec}
    p_0(N):=I_{\ell} \ ,\quad  p_j (N):=p_{j-1}(N)N+(p_{j-1}(N))' \ ,\quad j\geq 1,
\end{equation}
and denote by $M(P,Q)$ the $\ell\times \ell$ matrix in $\mathcal{R}_{\ell}$ defined by
\begin{equation}\label{def-MPQ}
    M(P,Q):=\sum_{j=0}^n B_j p_j(N).
\end{equation}

\begin{remark}\label{rem-Dj}
{Let us consider a Picard-Vessiot extension $\Sigma$} of $K$ for the differential system \eqref{eq-PV-system}. Hence, if $\psi$ is a solution of this system, then $\psi$ is a solution of $PY=\overline{0}$ and reciprocally. Observe that $\Sigma$ is a differential extension of finite algebraic transcendence degree of $K$, see the Appendix.

Given a solution $\psi = (\psi_1 , \dots ,\psi_{\ell} )^t$ of system \eqref{eq-PV-system}, we have $\psi\in \Sigma^{\ell}$ and 
observe that 
\[D^j\psi =p_j(N)\psi\quad, \quad j\geq 1,\]
with $p_j(N)$ defined by \eqref{eq-rec}.
Thus in $\Sigma^{\ell}$ the  derivation is defined by the differential system $DY=NY$.
\end{remark}

From the previous remark the following essential lemma is obtained. It will be used to prove the main results of this paper.

\begin{lemma}\label{lem-essential}
Let $P$ and $Q$ be matrix differential operators as in \eqref{def-operators}. Let $\Psi$ be a fundamental solution matrix for system \eqref{eq-PV-system}. Then,
\begin{equation}\label{eq-QM}
    Q\Psi =  M(P,Q)\Psi.
\end{equation}
\end{lemma}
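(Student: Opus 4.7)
The plan is to verify the identity by a short induction showing that $D^j\Psi=p_j(N)\Psi$ for every $j\ge 0$, and then apply it term-by-term to the sum defining $Q$. Since $\Psi$ is a fundamental solution matrix for $DY=NY$ (its columns are linearly independent solutions of the system \eqref{eq-PV-system}), we have the matrix equality $D\Psi=N\Psi$, which is the engine of the calculation.

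First I would set up the induction. The base case $j=0$ is immediate, since $D^0\Psi=\Psi=I_\ell\Psi=p_0(N)\Psi$. For the inductive step, assume $D^j\Psi=p_j(N)\Psi$. Applying $D$ to both sides and using the Leibniz rule for the derivation on $\mathcal{R}_\ell$, together with $D\Psi=N\Psi$, one obtains
\[
D^{j+1}\Psi = D\bigl(p_j(N)\Psi\bigr) = (p_j(N))'\Psi + p_j(N)\,D\Psi = \bigl((p_j(N))' + p_j(N)N\bigr)\Psi = p_{j+1}(N)\Psi,
\]
where the last equality is exactly the recursion \eqref{eq-rec}. Here one has to be careful about the order of multiplication, since $\mathcal{R}_\ell$ is noncommutative; but the recursion is already written in the correct order, so no commutativity is actually invoked.

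Once this identity is established for every $j$, the conclusion follows by linearity. Writing $Q=\sum_{j=0}^n B_j D^j$ and applying the identity summand by summand gives
\[
Q\Psi = \sum_{j=0}^{n} B_j D^j\Psi = \sum_{j=0}^{n} B_j\, p_j(N)\Psi = \left(\sum_{j=0}^{n} B_j\, p_j(N)\right)\Psi = M(P,Q)\,\Psi,
\]
which is the claimed equation \eqref{eq-QM}.

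There is no real obstacle here: the content of the lemma is essentially the observation already recorded in Remark \ref{rem-Dj}, namely that on the Picard-Vessiot solution space of $P$ every iterated derivative $D^j$ is realized by left multiplication by the matrix $p_j(N)$. The only point that deserves a moment of care is that the Leibniz rule $D(AB)=A'B+AB'$ on $\mathcal{R}_\ell$ must be applied in the right order when differentiating the matrix product $p_j(N)\Psi$; after that, the lemma is a one-line computation.
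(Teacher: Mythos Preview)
Your proof is correct and follows essentially the same approach as the paper: the paper's proof invokes Remark~\ref{rem-Dj} (which records the identity $D^j\psi=p_j(N)\psi$ for each solution column $\psi$) and then performs the same term-by-term summation you do. You simply carry out the induction explicitly and work with the full matrix $\Psi$ rather than column-by-column, which is an inessential difference.
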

\begin{proof}
Let us consider a solution $\psi = (\psi_1 , \dots ,\psi_{\ell} )^t$ of system \eqref{eq-PV-system}.
By Remark \ref{rem-Dj}, we have $\psi\in \Sigma^{\ell}$. 
Consequently we obtain
\begin{equation}\label{eq-commonSolution}
    Q\psi = \sum_{j=0}^n B_j D^j\psi=\sum_{j=0}^n B_j p_j(N)\psi=M(P,Q)\psi.
\end{equation}
\end{proof}

\begin{definition}\label{def-defDres}
With notations as above, we define \texttt{the matrix differential resultant} of two matrix differential operators $P$ and $Q$ in $\mathcal{R}_{\ell}$, with $P$ of order one and invertible leading coefficient matrix, to be 
\begin{equation}\label{def-Dres}
\Dres (P,Q)=\det M(P,Q).    
\end{equation}
\end{definition}

We are ready to prove Theorem A from the introduction.
This theorem ensures that the vanishing of the differential resultant of $P$ and $Q$ is a necessary and sufficient condition on the entries of the coefficient matrices of $P$ and $Q$ for the existence of a common nontrivial solution of \eqref{eq-system}, whenever $P$ and $Q$ commute. 

\begin{proof}[Proof of Theorem A]
\begin{enumerate}
    \item Let us assume that there is a nonzero common solution $\psi$ of system \eqref{eq-system}, $\psi\in \Sigma^{\ell}$. By Lemma \ref{lem-essential}, $Q\psi = M(P,Q)\psi$.
Thus the linear map defined by $M(P,Q)$ on $\Sigma^{\ell}$ has a non trivial kernel, and then its determinant is zero.
\item Suppose now that the determinant of the matrix $M(P,Q)$ is zero. Let $\Psi$ be a fundamental matrix of system \eqref{eq-PV-system}, that is an invertible matrix in $M_{\ell} (\Sigma)$, whose columns form a fundamental system of solutions. 
By Lemma \ref{lem-essential} it holds that $Q\Psi =M(P,Q)\Psi$
and therefore the matrix $Q\Psi $ has zero determinant. 

Let us now consider the columns $\{  Q\phi_1, \ldots, Q\phi_{\ell}  \}$ of the matrix $Q\Psi$. They form a system of linearly dependent vectors in $\Sigma^{\ell}$ over the differential field $K$, but also over the field of constants $C$. To verify this we can proceed as follows.

Up to a change of the order of the variables, we can assume that any proper subset of $\{  Q\phi_1, \ldots, Q\phi_r  \}$ is linearly independent over $K$ and 
\begin{equation}\label{eq-Qphi}
    Q\phi_1 = \sum_{i=2}^r c_i Q\phi_i \mbox{ with } c_i \in K.
\end{equation}
{Since $P$ and $Q$ commute then $Q \phi_j$ is also a solution of $P$ and by Remark \ref{rem-Dj} then $D(Q\phi_j )= N (Q\phi_j )$.}
So, differentiating \eqref{eq-Qphi} we get 
\begin{align*}
    \overline{0}&= D(Q\phi_1 )-\sum_{i=2}^r c'_i Q\phi_i - \sum_{i=2}^r c_i D(Q\phi_i )=\\
    &= N(Q\phi_1 )-\sum_{i=2}^r c'_i Q\phi_i - \sum_{i=2}^r c_i N(Q\phi_i ) .
\end{align*}
Then, the equality  $\overline{0}= \sum_{i=2}^r c'_i Q\phi_i$ implies that $c_i' =0$, so they are cons\-tants in $C$.

Finally we have obtained that the vectors $\{  Q\phi_1, \ldots, Q\phi_r  \}$ {are linearly dependent} over $C$. Then, for some constants $c_i \in C$ not all zero, we can consider the vector in $\Sigma^{\ell}$ defined by $\psi:=c_1 \phi_1 + \cdots +c_{\ell} \phi_{\ell}$ verifying
\begin{equation*}
    P\psi = \sum_{i=1}^{\ell} c_i P\phi_i = \overline{0} \quad , \quad 
    Q\psi = \sum_{i=1}^{\ell} c_i Q\phi_i = \overline{0}.
\end{equation*}
So $\psi$ is a common solution, and it is not the null solution because $\Psi$ is a fundamental matrix.
\end{enumerate}
\end{proof}

\begin{corollary}\label{cor-common solutions}
Let $P$ and $Q$ be commuting matrix differential operators as in \eqref{def-operators}. The space of common solutions of \eqref{eq-system} in a Picard-Vessiot extension $\Sigma$ of $K$ for the differential system \eqref{eq-PV-system} is non trivial if and only if $\Dres(P,Q)=0$. Moreover, the subspace of  solutions of   $QY= \overline{0}$ within  the space of solutions of $PY=\overline{0}$ is defined by the equation
\begin{equation}\label{eq-commonSol2}
    M(P,Q)Y=\overline{0}.
\end{equation}
Moreover, its solutions $\psi = (\psi_1 , \dots ,\psi_{\ell} )^t$ have entries belonging to a differential extension of finite algebraic transcendence degree $\Sigma$ of $K$.
\end{corollary}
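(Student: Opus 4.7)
The plan is to derive this corollary as an essentially immediate consequence of Theorem A and the essential Lemma \ref{lem-essential}, since each of the three assertions is either already proved or follows from a pointwise version of an identity that has already appeared. In particular no new computation is required; what remains is only to combine the ingredients and verify that the characterization via $M(P,Q)Y=\overline{0}$ holds on every solution of $PY=\overline{0}$, not merely on the columns of a distinguished fundamental matrix.

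First I would settle the equivalence. The forward implication, that existence of a nontrivial common solution forces $\Dres(P,Q)=0$, is exactly part (1) of Theorem A and uses neither commutativity nor any other extra hypothesis. The reverse implication is part (2) of Theorem A, which does require $P$ and $Q$ to commute and which produces a common solution in the Picard-Vessiot extension $\Sigma$ of $K$ associated with the first-order system \eqref{eq-PV-system}. Both halves are phrased in precisely the setting of the corollary, so the equivalence is obtained by direct quotation.

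Next I would prove the algebraic characterization of common solutions. Let $\psi\in \Sigma^{\ell}$ be any solution of $PY=\overline{0}$, equivalently of $DY=NY$ with $N=-A_1^{-1}A_0$. By Remark \ref{rem-Dj}, the iterated derivatives satisfy $D^j\psi=p_j(N)\psi$ for every $j\ge 0$, where $p_j(N)$ is defined by the recursion \eqref{eq-rec}. Substituting into $Q=\sum_{j=0}^n B_jD^j$ gives
\begin{equation*}
Q\psi=\sum_{j=0}^n B_j D^j\psi=\sum_{j=0}^n B_j p_j(N)\psi=M(P,Q)\psi,
\end{equation*}
so $\psi$ is a solution of $QY=\overline{0}$ if and only if $M(P,Q)\psi=\overline{0}$. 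This is the advertised description of the subspace of common solutions as the null space of the matrix $M(P,Q)$ acting on the solution space of $P$, and it immediately recovers the determinantal criterion $\Dres(P,Q)=\det M(P,Q)=0$ of the first part.

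Finally, for the assertion on the transcendence degree, the field $\Sigma$ has been constructed as a Picard-Vessiot extension for the first-order matrix system $DY=NY$; by the general Picard-Vessiot theory (see Remark \ref{rem-Dj} and the Appendix), any such extension has finite algebraic transcendence degree over $K$, and the entries of every fundamental solution matrix, hence of any $\psi\in\Sigma^\ell$ produced by part (2) of Theorem A, lie in $\Sigma$. I do not expect any real obstacle: the only mild point is to observe that the identity $Q\psi=M(P,Q)\psi$ proved columnwise in Lemma \ref{lem-essential} for a fundamental matrix $\Psi$ in fact holds for each individual solution $\psi$ of $PY=\overline{0}$, which is exactly what Remark \ref{rem-Dj} guarantees.
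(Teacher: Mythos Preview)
Your proposal is correct and takes essentially the same approach as the paper, which simply states that the result follows by Theorem~A and Lemma~\ref{lem-essential}. You have merely unpacked these references in detail, and your observation that the identity $Q\psi=M(P,Q)\psi$ holds for each individual solution (via Remark~\ref{rem-Dj}) is exactly the content of equation~\eqref{eq-commonSolution} in the proof of Lemma~\ref{lem-essential}.
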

\begin{proof}
The result follows by Theorem A and Lemma \ref{lem-essential}.
\end{proof}

\begin{remark}\label{remark-1}
{Observe that equation \eqref{eq-commonSol2} together with the structure of $\Sigma$, a Picard-Vessiot field for the system \eqref{eq-PV-system}, determines the space of common solutions of \eqref{eq-system}. }
\end{remark}

The previous results indicate the interest of developing a theory of differential resultants for matrix differential operators in $\mathcal{R}_{\ell}[D]$, which we initiate in this work and plan to develop in the near future. {To define a Sylvester style matrix whose determinant is the resultant, the notion of wronskian for MODOs developed in \cite{Kasman2017} will be useful.} 
In the remaining parts of this paper we will apply Theorem A to guarantee the solvability of matrix-type spectral problems.

\section{Spectral curves for MODOs}\label{sec-spectral curves}

Let us consider algebraic variables $\lambda$ and $\mu$ with respect to $\partial$. Thus $\partial \lambda = 0$ and $\partial \mu = 0$ and we can extend the derivation $\partial$ of $K$ to the polynomial ring $K[\lambda, \mu]$. Hence $(K[\lambda, \mu], \partial)$ is a differential ring whose ring of constants is $(C[\lambda, \mu], \partial)$. Since $ I_{\ell}\lambda$ is a matrix in $M_{\ell}(K[\lambda,\mu])$,
given a matrix differential operator $L$ in $\mathcal{R}_{\ell}[D]$ we will denote by $L-\lambda$ the matrix differential operator $L- I_{\ell}\lambda$ in the matrix ring  $M_{\ell}(K(\lambda,\mu))[D]$, extending the derivation $D$ to matrices with entries in the differential field $K(\lambda,\mu)$.

\medskip

Given differential operators $L$ and $B$ in $\mathcal{R}_{\ell}[D]=M_{\ell}(K)[D] $, 
we consider the spectral problem
\begin{equation}\label{eq-sp-system}
    LY=\lambda  Y\quad , \quad 
    BY=\mu Y
    \quad ,\,\,\, Y=(y_1,\ldots ,y_{\ell})^t .
\end{equation}
{
The spectral problem \eqref{eq-sp-system} can be studied using Theorem A for
\begin{equation}\label{def-operatorsLB}
L=A_0+A_1 D     \quad \textrm{and} \quad B=\sum_{j=0}^n B_j D^j , \ \textrm{where } n\geq 1 \ ,
\end{equation}
assuming that $A_1$ is invertible.
More precisely, we will apply Theorem A to the matrix differential operators}
\begin{equation}
    P=L-\lambda = (A_0 - I_{\ell} \lambda) + A_1 D , \quad     Q=B-\mu= (B_0 - I_{\ell} \mu) +\sum_{j=1}^n B_j D^j.
\end{equation}

Let $N_{\lambda}$ be the matrix  $N_{\lambda}=-A_1^{-1}(A_0-I_{\ell}\lambda )$ in $M_{\ell} (K(\lambda , \mu ))$, and consider  the differential system
\begin{equation}\label{eq-PV-system_lambda}
    DY= N_\lambda Y \quad \textrm{ with } \ N_{\lambda}=-A_1^{-1}(A_0- I_{\ell}\lambda ) .
\end{equation}
We will consider next the matrix differential resultant  defined in \eqref{def-Dres} for $P=L-\lambda$ and $Q=B-\mu$,
\begin{equation}\label{eq-dresLB}
    \Dres(L-\lambda,B-\mu)=\det M(L-\lambda, B-\mu)
\end{equation}
where
\begin{equation}\label{eq-matrix}
    M(L-\lambda, B-\mu)=B_0-I_{\ell} \mu +\sum_{j=1}^n B_j p_j(N_{\lambda}),
\end{equation}
with  $p_j$ defined by the recursion \eqref{eq-rec}. With the above assumptions we have the following statements.

\begin{lemma}\label{lemma-1}
The differential resultant $\Dres(L-\lambda,B-\mu)$ is a polynomial $f(\lambda,\mu)$ in $K[\lambda,\mu]$, of degrees $\ell$ in $\mu$ and less than or equal to $\ell n$ in $\lambda$. More precisely
\begin{equation}\label{eq-degreesf}
{f(\lambda,\mu)=(-1)^{\ell} \mu^{\ell}+\det(B_n)\det(A_1^{-1})^n\lambda^{n\ell}+q(\lambda,\mu),}
\end{equation}
where the degree of $q$ is less than $\ell$ in $\mu$ and less than $n\ell$ in $\lambda$.
{Thus the degree in $\lambda$ is exactly $n\ell$ if and only if $\det(B_n)\neq 0$.}
\end{lemma}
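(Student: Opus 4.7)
The plan is to unpack $M(L-\lambda,B-\mu)$ as a matrix with entries in $K[\lambda,\mu]$, and then obtain $f$ as its determinant by exploiting the affine dependence on $\mu$, which reduces matters to the characteristic polynomial of a matrix polynomial in $\lambda$.

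Step one: analyse the recursion $p_j(N_\lambda)$. Writing $N_\lambda = A_1^{-1}\lambda - A_1^{-1}A_0$, a matrix of $\lambda$-degree $1$ with leading coefficient $A_1^{-1}\in\mathcal{R}_\ell$, I would prove by induction on $j$ that $p_j(N_\lambda)\in M_\ell(K)[\lambda]$ has $\lambda$-degree exactly $j$ with leading matrix coefficient $(A_1^{-1})^j$. Applied to the recursion $p_j(N_\lambda) = p_{j-1}(N_\lambda)\,N_\lambda + p_{j-1}(N_\lambda)'$, the product contributes $(A_1^{-1})^{j-1}\lambda^{j-1}\cdot A_1^{-1}\lambda = (A_1^{-1})^j\lambda^j$ in top $\lambda$-degree; the derivative $p_{j-1}(N_\lambda)'$ has $\lambda$-degree at most $j-1$ because $\partial\lambda = 0$ and so $D$ acts only on the matrix coefficient of each $\lambda^k$, not on $\lambda$ itself. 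Substituting into \eqref{eq-matrix} shows that $M_1(\lambda):=B_0+\sum_{j=1}^n B_j\,p_j(N_\lambda)$ lies in $M_\ell(K)[\lambda]$ with $\lambda$-degree at most $n$ and leading matrix $B_n(A_1^{-1})^n$. In particular $M(L-\lambda,B-\mu) = M_1(\lambda) - \mu I_\ell$ has entries in $K[\lambda,\mu]$, whence $f = \det M(L-\lambda,B-\mu)\in K[\lambda,\mu]$.

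Step two: extract the bidegree via characteristic-polynomial expansion. Since $M(L-\lambda,B-\mu) = M_1(\lambda) - \mu I_\ell$,
$$f(\lambda,\mu) = \det\bigl(M_1(\lambda) - \mu I_\ell\bigr) = (-1)^\ell \mu^\ell + \sum_{k=1}^\ell (-1)^{\ell-k}\,e_k\bigl(M_1(\lambda)\bigr)\,\mu^{\ell-k},$$
where $e_k(M_1(\lambda))$ is the sum of the $k\times k$ principal minors of $M_1(\lambda)$. Each such $k\times k$ minor is a sum of products of $k$ entries of $M_1(\lambda)$, each of $\lambda$-degree at most $n$, so $\deg_\lambda e_k\bigl(M_1(\lambda)\bigr) \leq kn$. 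For $k<\ell$ this bound is strictly less than $n\ell$; these terms are absorbed into $q(\lambda,\mu)$, which then has $\mu$-degree less than $\ell$ and $\lambda$-degree less than $n\ell$. For $k=\ell$, the coefficient is $e_\ell(M_1(\lambda)) = \det M_1(\lambda)$, and by multilinearity of $\det$ its $\lambda^{n\ell}$-coefficient is $\det\bigl(B_n(A_1^{-1})^n\bigr) = \det(B_n)\det(A_1^{-1})^n$, giving the form \eqref{eq-degreesf}.

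The last sentence of the lemma follows immediately: since $A_1^{-1}$ is invertible, $\det(A_1^{-1})^n\neq 0$, so the $\lambda$-degree of $f$ equals $n\ell$ if and only if $\det(B_n)\neq 0$. There is no serious obstacle in this plan; the only delicate point is the inductive degree control of $p_j(N_\lambda)$, which hinges on $\partial\lambda = 0$ to prevent the derivative step from raising the $\lambda$-degree.
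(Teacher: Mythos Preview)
Your proposal is correct and follows essentially the same approach as the paper: both arguments first establish by induction that $p_j(N_\lambda)=(A_1^{-1})^j\lambda^j+(\text{lower order})$, then write $M(L-\lambda,B-\mu)=M_1(\lambda)-\mu I_\ell$ and read off the bidegree from the determinant. Your use of the characteristic-polynomial expansion via the $e_k$ is a slightly more explicit bookkeeping of the cross terms than the paper's ``it is now immediate'' step, but the underlying argument is the same.
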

\begin{proof}
By \eqref{eq-matrix} $M(L-\lambda, B-\mu)$
is an $\ell\times \ell$ matrix with entries in $K[\lambda,\mu]$.
Thus $f(\lambda,\mu)=\det M(L-\lambda, B-\mu)$ is a polynomial in  $K[\lambda,\mu]$. {Let us use \eqref{eq-matrix} to obtain \eqref{eq-degreesf}.} 
\begin{enumerate}
    \item We will prove by induction the following claim: 
    \begin{equation}\label{eq-indpj}
        p_j(N_{\lambda})=(A_1^{-1})^j\lambda^j+\Delta_{j-1},\mbox{ for every }j\geq 1
    \end{equation}
    where $\Delta_{j-1}$ is a matrix whose entries are polynomials in $\lambda$ of degree less than or equal to $j-1$.

    By \eqref{eq-rec} we know that $p_1(N_{\lambda})=N_{\lambda}=A_1^{-1}\lambda+\Delta_0$ with $\Delta_0=-A_1^{-1}A_0$. Let us assume \eqref{eq-indpj} to compute 
    \begin{align*}
        p_{j+1}(N_{\lambda})&=p_{j}(N_{\lambda}) N_{\lambda}+p_{j}(N_{\lambda})'=\\
        &=((A_1^{-1})^j\lambda^j+\Delta_{j-1})(A_1^{-1}\lambda+\Delta_0)+p_{j}(N_{\lambda})'=\\
        &=(A_1^{-1})^{j+1}\lambda^{j+1}+\Delta_{j},
    \end{align*}
    where
    \begin{align*}
        \Delta_{j}=(A_1^{-1})^j\Delta_0\lambda^j+\Delta_{j-1}A_1^{-1}\lambda+\Delta_{j-1}\Delta_0+p_{j}(N_{\lambda})'
    \end{align*}
    is a matrix whose entries are polynomials in $\lambda$ of degree less than or equal to $j$.

    \item We can now prove \eqref{eq-degreesf}. The $\ell\times \ell$ matrix
   \begin{align*}
    M=M(L-\lambda, B-\mu)&=B_0-I_{\ell} \mu +\sum_{j=1}^n B_j p_j(N_{\lambda})=\\
    &=B_0-I_{\ell} \mu +\sum_{j=1}^n B_j (A_1^{-1})^j \lambda^j +B_j\Delta_j
    \end{align*}
has entries $\alpha_{i,j}\lambda^n+\beta_{i,j}(\lambda)-\delta_{ij} \,\mu$ \ where  $\delta_{ij}$ is the Kronecker delta, 
\[
B_n(A_1^{-1})^n=(\alpha_{i,j})\in  M_{\ell}(K)
\]
and 
\[B_0 +\sum_{j=1}^{n-1} B_j p_j(N_{\lambda})=(\beta_{i,j}(\lambda))\in M_{\ell}(K[\lambda]).\]
It is now immediate that \eqref{eq-degreesf} follows from
\begin{align*}
    \det(M)&=\det(\alpha_{i,j}\lambda^n+\beta_{i,j}(\lambda)-\delta_{ij} \, \mu)=(-\mu)^{\ell}+\det(\alpha_{i,j})\lambda^{n\ell}+q(\lambda,\mu).
\end{align*}
\end{enumerate}
\end{proof}
\begin{remark}\label{rem-PVE}\label{remark-PVE}
We can consider the operators $P=L-\lambda$ and $Q=B-\mu$ with matrix coefficients with entries in $\mathcal
{F}=K(\lambda,\mu)$.
Let  $\overline{\mathcal
{F}}$  be an algebraic closure of the differential field $\mathcal
{F}$, and $\mathcal{C}$ its field of constants, which is known to be algebraically closed, see \eqref{eq-constants-cl-appenddix} in the Appendix.  
In consequence, by Remark \ref{rem-E-appendix} in the Appendix, there exists a fundamental matrix $\Psi_\lambda$ of \eqref{eq-PV-system_lambda} in $M_{\ell}(\mathcal{E})$  such that {$D\Psi_\lambda=N_{\lambda}\Psi_\lambda$,  or equivalently}
\begin{equation}\label{eq-with-lamda}
L\Psi_\lambda = \lambda \Psi_\lambda    ,
\end{equation}
where $\mathcal{E}$ is a Picard-Vessiot extension of $\overline{\mathcal
{F}}$ for this differential system. 

To be more precise, the matrix coefficients of $P$ and $Q$ have entries in the differential ring $K[\lambda,\mu]$, whose ring of constants is $C[\lambda,\mu]$. Observe that $C[\lambda,\mu]\subset \mathcal{C}$ and $K[\lambda,\mu]\cap \mathcal{C}=C[\lambda,\mu]$. For details, see Lemma \ref{lemma-para-Previato}  in the Appendix.
\end{remark}

The next lemmas will be necessary to prove the main result in this section. 

\begin{lemma}\label{lem-V}
Given a solution $ \psi$  of \eqref{eq-PV-system_lambda} in  $\mathcal{E}^{\ell}$, the next identity holds
\begin{equation}
    (B-\mu)(\psi)=M(L-\lambda , B-\mu )\cdot \psi.
\end{equation}
\end{lemma}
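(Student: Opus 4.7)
The plan is to reduce this lemma directly to Lemma \ref{lem-essential}, merely observing that its proof applies verbatim in the extended setting where matrix coefficients have entries in $K(\lambda,\mu)$. First I would point out that the operators $P = L-\lambda$ and $Q = B-\mu$ fit the template \eqref{def-operators}: $P$ has order $1$ with invertible leading coefficient $A_1$, and the first-order system associated to $PY = \overline{0}$ is precisely \eqref{eq-PV-system_lambda}, $DY = N_\lambda Y$, with $N_\lambda = -A_1^{-1}(A_0 - I_\ell \lambda)$. Since $\lambda,\mu$ are constants for $\partial$, the derivation $D$ extends naturally and all the formal manipulations of Section \ref{sec-commonSol} go through with $N$ replaced by $N_\lambda$.

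Next I would invoke the analogue of Remark \ref{rem-Dj} in this setting: given a solution $\psi \in \mathcal{E}^\ell$ of \eqref{eq-PV-system_lambda}, an easy induction using the commutation rule $D(A\psi) = A'\psi + AD\psi = A'\psi + AN_\lambda \psi$ combined with the recursion \eqref{eq-rec} shows that
\[
D^j \psi = p_j(N_\lambda)\,\psi \quad \text{for every } j\geq 0.
\]
This is the key identity: it converts differentiation of $\psi$ into matrix multiplication by $p_j(N_\lambda)$.

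Finally I would expand $Q = B - \mu = (B_0 - I_\ell \mu) + \sum_{j=1}^n B_j D^j$ and apply it to $\psi$:
\[
(B-\mu)\psi = (B_0 - I_\ell \mu)\psi + \sum_{j=1}^n B_j D^j\psi = \Bigl( B_0 - I_\ell \mu + \sum_{j=1}^n B_j\, p_j(N_\lambda)\Bigr)\psi,
\]
and the matrix in brackets is exactly $M(L-\lambda, B-\mu)$ by the formula \eqref{eq-matrix}. This yields the claimed identity.

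There is essentially no obstacle here; the only thing to be careful about is the change of base ring from $K$ to $K(\lambda,\mu)$, together with the passage to the Picard-Vessiot extension $\mathcal{E}$ of Remark \ref{rem-PVE}, so that a solution $\psi \in \mathcal{E}^\ell$ of \eqref{eq-PV-system_lambda} exists and the manipulations above are meaningful. Since $\lambda,\mu \in \mathcal{C}$ lie in the field of constants and the recursion \eqref{eq-rec} is defined over $K(\lambda,\mu)$, no new difficulty arises and the argument of Lemma \ref{lem-essential} carries over without modification.
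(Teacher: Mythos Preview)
Your proof is correct and follows exactly the same route as the paper: apply Lemma~\ref{lem-essential} (more precisely, the identity \eqref{eq-commonSolution} in its proof) with $P=L-\lambda$ and $Q=B-\mu$, after noting via Remark~\ref{rem-PVE} that this pair fits the template \eqref{def-operators} over the base field $K(\lambda,\mu)$. The paper's own proof is the one-line citation of Remark~\ref{remark-PVE} and Lemma~\ref{lem-essential}; you have simply unpacked that citation.
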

\begin{proof}
By Remark \ref{remark-PVE} and Lemma \ref{lem-essential} for $P=L-\lambda$ and $Q=B-\mu$ the equality follows.
\end{proof}

\begin{lemma}\label{lem-Delta0}
Let us assume that $L$ and $B$ commute.
Given a fundamental matrix of solutions  $\Psi_{\lambda}$ of $LY=\lambda Y$ in $M_\ell (\mathcal{E})$ the next identity holds
\begin{equation}
    (B-\mu)(\Psi_\lambda) =\Psi_\lambda \cdot \Delta \ ,
\end{equation}
for some $\ell\times \ell$ matrix $\Delta$ with entries in the field of constants $\mathcal{C}$ of $\mathcal{E}$. 
\end{lemma}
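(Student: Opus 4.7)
The plan is to use commutativity to show that the columns of $(B-\mu)\Psi_\lambda$ are again solutions of $(L-\lambda)Y=\overline{0}$, and then to invoke the standard Picard--Vessiot fact that the transition matrix between two full sets of solutions of a linear system lies over the constants.

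First I would record that, since $\lambda$ and $\mu$ are algebraic variables with $\partial\lambda=\partial\mu=0$, the scalar matrices $I_\ell\lambda$ and $I_\ell\mu$ are central constants in $M_\ell(K(\lambda,\mu))[D]$. Therefore the assumption $[L,B]=\m0$ in $\mathcal{R}_\ell[D]$ lifts to $[L-\lambda,B-\mu]=\m0$ in $M_\ell(K(\lambda,\mu))[D]$. Applied to each column $\phi_j$ of $\Psi_\lambda$, this yields
\begin{equation*}
(L-\lambda)\bigl((B-\mu)\phi_j\bigr)=(B-\mu)\bigl((L-\lambda)\phi_j\bigr)=\overline{0},
\end{equation*}
so every column of the matrix $(B-\mu)\Psi_\lambda\in M_\ell(\mathcal{E})$ belongs to the solution space of the homogeneous system \eqref{eq-PV-system_lambda}.

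Next I would pass to the matrix $\Delta:=\Psi_\lambda^{-1}(B-\mu)\Psi_\lambda\in M_\ell(\mathcal{E})$, which makes sense because $\Psi_\lambda$ is fundamental and hence invertible over $\mathcal{E}$. To prove that every entry of $\Delta$ lies in the field of constants $\mathcal{C}$, I would compute $D\Delta$ using $D\Psi_\lambda=N_\lambda\Psi_\lambda$ and the identity $D(\Psi_\lambda^{-1})=-\Psi_\lambda^{-1}N_\lambda$. Since the previous paragraph gives $D\bigl((B-\mu)\Psi_\lambda\bigr)=N_\lambda(B-\mu)\Psi_\lambda$, the two contributions cancel and $D\Delta=\m0$. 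Hence $\Delta\in M_\ell(\mathcal{C})$ and $(B-\mu)\Psi_\lambda=\Psi_\lambda\cdot\Delta$, as required.

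I do not foresee a serious obstacle here: the content of the lemma is essentially the Picard--Vessiot principle that any two fundamental matrices of the same linear system differ by a constant matrix, combined with the elementary observation that the centralizer of $L-\lambda$ preserves its solution space. The only point worth stating cleanly is why $\lambda$ and $\mu$ may be treated as constants when moving from the original commutator $[L,B]=\m0$ to $[L-\lambda,B-\mu]=\m0$, and this is guaranteed by the differential ring structure on $K[\lambda,\mu]$ recalled at the beginning of Section \ref{sec-spectral curves} together with Remark \ref{rem-PVE}.
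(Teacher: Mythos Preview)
Your proof is correct and follows essentially the same route as the paper: both use the commutativity of $L-\lambda$ and $B-\mu$ to show that $(B-\mu)\Psi_\lambda$ again satisfies $(L-\lambda)Y=\overline{0}$, then set $\Delta=\Psi_\lambda^{-1}(B-\mu)\Psi_\lambda$ and conclude it is constant. Your version is somewhat more explicit (computing $D\Delta$ directly and justifying why $[L,B]=\m0$ lifts to $[L-\lambda,B-\mu]=\m0$), whereas the paper simply invokes the standard Picard--Vessiot fact that two fundamental matrices differ by a constant matrix.
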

\begin{proof}
With  notations as above, we consider $\Psi_\lambda$ a fundamental matrix sa\-tisfying \eqref{eq-with-lamda}. Morever, observe that $(B-\mu )\Psi_\lambda$ satisfies  \eqref{eq-with-lamda} as well, since 
\begin{equation*}
    (L-\lambda)((B-\mu)\Psi_\lambda )=(B-\mu) ((L-\lambda)\Psi_\lambda )=0.
\end{equation*}
Hence, $\Delta=\Psi_\lambda^{-1} \cdot (B-\mu)\Psi_\lambda$
is differentially constant.
\end{proof}

We are ready to prove Previato's Theorem for MODOs in this situation, see \cite{Prev} and \cite{MRZ1} for Previato's Theorem for ODOs.

\begin{proof}[Proof of Theorem B]
With the previous notations, we consider $\Psi_\lambda$ a fundamental matrix sa\-tisfying \eqref{eq-with-lamda}. By Lemma \ref{lem-Delta0}
\begin{equation}
    (B-\mu)\Psi_\lambda =\Psi_\lambda \cdot \Delta \ ,
\end{equation}
for some matrix $\Delta$ with entries in $\mathcal{C}$. On the other hand, by Lemma \ref{lem-V}
\begin{equation*}
    (B-\mu)\Psi_{\lambda}=M(L-\lambda , B-\mu )\Psi_{\lambda}  .
\end{equation*}
Therefore, since the fundamental matrix $\Psi_\lambda$ is an invertible matrix, we obtain that $\det (M(L-\lambda , B-\mu )= \det (\Delta )$,  so $\Dres(L-\lambda,B-\mu)$ is a polynomial in $C [\lambda ,\mu ]$, since $K[\lambda,\mu]\cap \mathcal{C}=C[\lambda,\mu]$.
\end{proof}

From now on in this paper we will assume that $L$ and $B$  commute. 
Therefore, using the matrix differential resultant we compute 
\begin{equation}\label{def-resultant}
    f(\lambda,\mu)=\Dres(L-\lambda,B-\mu),
\end{equation}
the defining polynomial of the affine plane algebraic curve 
\begin{equation}\label{eq-scurve}
\Gamma=\{(\lambda,\mu)\in C^2\mid f(\lambda,\mu)=0\}.  
\end{equation}

We will call $\Gamma$ \texttt{the spectral curve} of the pair $L,B$. In sections \ref{sec-BC} and \ref{sec-Classification} we establish an isomorphism between the ring of this curve and the commutative subring $C[L,B]$ of the ring of MODOs, under an appropriate hypothesis on $\Gamma$.

In general $\Gamma$ is not an irreducible curve, that is, $\Gamma$ may have more than one irreducible component. In Section \ref{sec-AKNS}, we give Example \ref{ex-irreducible} with an irreducible spectral curve and Example \ref{ex-nonirred} where the spectral curve has two irreducible components. Thus one open issue is to characterize the irreducibility of the curve $\Gamma$ or study the role of the irreducible components of $\Gamma$ in the study of the direct and inverse spectral problems.

\medskip

We finish this section applying Theorem A to the pair $L-\lambda_0$ and $B-\mu_0$, for some arbitrary point $P=(\lambda_0 ,\mu_0 )$ in $C^2$. We conclude that the spectral problem \begin{equation}
    L Y =\lambda_0 Y \quad , \quad B Y = \mu_0 Y \ .
\end{equation} is a coupled problem  by the algebraic relation $f(\lambda_0 , \mu_0 ) =0$. 

\begin{corollary}\label{cor-coupled}
Given commuting matrix differential operators $L$ and $B$ in $R_{\ell}$[D], with $L=A_0+A_1 D$  and invertible leading coefficient, let us consider the polynomial $f(\lambda,\mu)$ defined by $\Dres(L-\lambda,B-\mu)$. Let $P=(\lambda_0 ,\mu_0) \in C^2$. 
The spectral problem 
\begin{equation}
    L Y =\lambda_0 Y \quad , \quad B Y = \mu_0 Y \ .
\end{equation}
 has a nontrivial solution  if and only if $f(P)=0$, that is $P$ is a point on the spectral curve $\Gamma$ defined in \eqref{eq-scurve}. Moreover the common solution $\psi$ belongs to ${\Sigma_0 }^{\ell}$, where $\Sigma_0$ is a Picard-Vessiot extension for the linear differential system 
 \begin{equation}
    DY= N_{\lambda_0 }Y \quad \textrm{ with } \ N_{\lambda_0 }=-A_1^{-1}(A_0- {\lambda_0 } I_{\ell} ) .
\end{equation}
\end{corollary}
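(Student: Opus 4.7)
The plan is to reduce the statement to a direct application of Theorem A to the pair of commuting MODOs $P = L - \lambda_0$ and $Q = B - \mu_0$. Since $\lambda_0, \mu_0 \in C \subset K$, both $P$ and $Q$ lie in $\mathcal{R}_\ell[D]$; commutativity of $L$ and $B$ together with the fact that scalar constant matrices commute with every MODO yields $[P,Q]=[L,B]=\m0$. Moreover $P = (A_0 - \lambda_0 I_\ell) + A_1 D$ has order one with invertible leading coefficient $A_1$, so the hypotheses of Theorem A are satisfied for the pair $(P,Q)$.

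The first key step is to verify the compatibility of the resultant with specialization at the point $(\lambda_0,\mu_0)$, namely
\[
f(\lambda_0,\mu_0) = \Dres(L-\lambda_0, B-\mu_0).
\]
To see this I would go back to the construction in \eqref{eq-matrix}: the matrix $M(L-\lambda,B-\mu)$ is built from the recursion \eqref{eq-rec} applied to $N_\lambda = -A_1^{-1}(A_0 - \lambda I_\ell)$, together with the additive term $B_0 - I_\ell \mu$. Since the recursion uses only the ring operations and the derivation $D$ (which annihilates $\lambda$ and $\mu$), evaluating $\lambda\mapsto\lambda_0\in C$ and $\mu\mapsto\mu_0\in C$ commutes term-by-term with \eqref{eq-rec}, so $p_j(N_\lambda)\big|_{\lambda=\lambda_0} = p_j(N_{\lambda_0})$. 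Substituting into \eqref{eq-matrix} gives $M(L-\lambda,B-\mu)\big|_{(\lambda_0,\mu_0)} = M(L-\lambda_0,B-\mu_0)$, and taking determinants yields the specialization identity. By Theorem B the polynomial $f$ already has coefficients in $C$, so the evaluation $f(\lambda_0,\mu_0)$ is a well-defined element of $C$ that coincides with $\Dres(L-\lambda_0,B-\mu_0)$.

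Given this identity, the two implications follow directly from Theorem A applied to $(P,Q)$. For the ``only if'' direction, if the system $(L-\lambda_0)Y = \overline{0}$, $(B-\mu_0)Y = \overline{0}$ admits a nontrivial solution in some differential extension of $K$, then Theorem A\,(1) yields $\Dres(P,Q) = 0$, hence $f(\lambda_0,\mu_0) = 0$. For the ``if'' direction, the assumption $f(\lambda_0,\mu_0) = 0$ combined with commutativity of $P$ and $Q$ puts us in the setting of Theorem A\,(2), which produces a nontrivial common solution $\psi = (\psi_1,\dots,\psi_\ell)^t$ whose entries lie in a differential extension $\Sigma_0$ of finite algebraic transcendence degree. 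This extension is precisely the Picard-Vessiot field associated with the system $PY = \overline{0}$, and under the substitution \eqref{eq-PV-system} with $P = L - \lambda_0$ this system becomes $DY = N_{\lambda_0} Y$ with $N_{\lambda_0} = -A_1^{-1}(A_0 - \lambda_0 I_\ell)$, matching the PV extension named in the statement.

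The only non-routine point is the specialization step, which requires checking that the formal polynomial $f(\lambda,\mu)$ obtained by treating $\lambda,\mu$ as differentially constant indeterminates does specialize correctly to constants in $C$; this is the place where Theorem B is used (without it, one would only know $f \in K[\lambda,\mu]$ and the evaluation $f(\lambda_0,\mu_0)$ would not automatically coincide with a quantity built purely from $K$-data). Once this specialization is in hand, the corollary is an immediate translation of Theorem A combined with the observation that the Picard-Vessiot extension of $K$ for $DY = N_{\lambda_0} Y$ is the same object that Theorem A produces as the receptacle of the common solution $\psi$.
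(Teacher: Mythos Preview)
Your argument is correct and is exactly the approach the paper intends: the corollary is stated immediately after the sentence ``applying Theorem A to the pair $L-\lambda_0$ and $B-\mu_0$'' and is left without proof, so your reduction to Theorem A for $P=L-\lambda_0$, $Q=B-\mu_0$ together with the specialization $M(L-\lambda,B-\mu)\big|_{(\lambda_0,\mu_0)}=M(L-\lambda_0,B-\mu_0)$ is precisely what is meant.

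One small point: your final paragraph overstates the role of Theorem~B. The specialization identity $f(\lambda_0,\mu_0)=\Dres(L-\lambda_0,B-\mu_0)$ is already fully justified by your second paragraph, since the recursion \eqref{eq-rec} and the determinant are polynomial in the entries of $N_\lambda$ and evaluation at $\lambda_0,\mu_0\in C\subset K$ commutes with those operations; this works even if one only knows $f\in K[\lambda,\mu]$. Theorem~B is what makes $\Gamma$ an honest algebraic curve over $C$, but it is not needed for the equivalence proved here.
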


\section{{The ideal of Burchnall-Chaundy polynomials}}\label{sec-BC}

Let us consider commuting matrix differential operators $L$ and $B$ in $\mathcal{R}_{\ell}[D]$, of respective orders $1$ and $n\geq 1$. We  assume that the leading coefficient of $L$ is an invertible matrix.
Using the matrix differential resultant we compute 
\begin{equation}\label{def-resultant}
    f(\lambda,\mu)=\Dres(L-\lambda,B-\mu),
\end{equation}
the defining polynomial of the spectral curve $\Gamma$ of the pair $L,B$, see \eqref{eq-scurve}.
We will prove next that $f$ is a good candidate as a polynomial of Burchnall-Chaundy type for the pair of commuting MODOs $L,B$. We propose below the next conjecture: when $\lambda$ is replaced by $L$ and $\mu$ is replaced by $B$ then $f$ becomes the zero matrix differential operator, see Conjecture \ref{conjecture}.

\medskip

Consider the natural ring homomorphism  
\begin{equation}\label{eq-rho}
    \rho:C[\lambda,\mu]\longrightarrow \mathcal{R}_{\ell}[D],
\end{equation}
defined by $\rho (c)=c I_{\ell}$, for every $ c\in C$,
\begin{equation}\label{eq-morphism1}
    \lambda\mapsto L\mbox{ and }\mu\mapsto B .
\end{equation}
Thus
\[\sum a_{i,j} \lambda^i\mu^j \mapsto \sum a_{i,j} L^iB^j.\]
Observe that the image of $\rho$ is a commutative subalgebra of $\mathcal{R}_{\ell}[D]$, namely the commutative algebra
\begin{equation}
C[L,B]:=\left\{\sum a_{i,j} L^iB^j\mid a_{i,j}\in C\right\}.
\end{equation}
Given $g\in C[\lambda,\mu]$ we will denote its image $\rho(g)$ by $g(L,B)$.

\medskip

Let us consider the kernel $\Ker (\rho)$ of $\rho$ and 
observe that the polynomials in $\Ker (\rho)$ play the role of Burchnall-Chaundy polynomials in the case of ODOs \cite{BC1}. 

\begin{definition}
Given a pair of commuting MODOs $L$ and $B$ in $\mathcal{R}_{\ell}[D]$, with $L$ of order $1$ and invertible leading coefficient matrix,
we will say that $g\in C[\lambda,\mu]$ is \texttt{a Burchnall-Chaundy (BC) polynomial} of the pair $L,B$ if $$g(L,B)=\m0.$$
\end{definition}

We will prove next that $f(L,B)$, for $f$ defined in \eqref{def-resultant}, is a MODO, that becomes zero when considering its action on solutions of $L-\lambda$.

\begin{theorem}\label{thm-ideal1}
Given commuting MODOs $L$ and $B$ in $\mathcal{R}_{\ell}[D]$, we  assume that $L$ has order $1$, with invertible leading coefficient. 
Let us consider the polynomial $f(\lambda,\mu)=\Dres(L-\lambda,B-\mu)$ in $C[\lambda,\mu]$. 
Then $f(L,B)(\Psi_{\lambda})=\m0$, for any fundamental matrix $\Psi_{\lambda}$ of the system $LY=\lambda Y$.
\end{theorem}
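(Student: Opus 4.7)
The plan is to reduce $f(L,B)\Psi_\lambda$ to a right-multiplication by a constant matrix on $\Psi_\lambda$, and then to recognise that constant matrix expression as a Cayley--Hamilton identity. The bridge is the matrix $\Delta$ produced by Lemma~\ref{lem-Delta0}, together with the identity $f(\lambda,\mu)=\det\Delta$ extracted from the proof of Theorem B.

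Concretely, first I would choose the fundamental matrix $\Psi_\lambda$ to be independent of $\mu$. This is legitimate because the linear system \eqref{eq-PV-system_lambda} does not involve $\mu$, so a Picard--Vessiot extension of $K(\lambda)$ already carries such a $\Psi_\lambda$; viewing it inside $\mathcal{E}$ keeps all later comparisons inside one ambient differential field. Set $\tilde B:=\Delta+\mu I_\ell$; Lemma~\ref{lem-Delta0} gives $B\Psi_\lambda=(B-\mu)\Psi_\lambda+\mu\Psi_\lambda=\Psi_\lambda\tilde B$, and the entries of $\tilde B$ lie in the field of constants $\mathcal{C}$ (since $\Delta$ does and $\mu$ is a $D$-constant). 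Because $\Psi_\lambda$ and $B$ are both independent of $\mu$, so is $\tilde B$. Next, I would show by induction that for any constant matrix $X\in M_\ell(\mathcal{C})$ one has $L(\Psi_\lambda X)=\lambda\Psi_\lambda X$: indeed $D(\Psi_\lambda X)=D(\Psi_\lambda)X$ because $DX=\m0$, hence $L(\Psi_\lambda X)=(A_0\Psi_\lambda+A_1 D\Psi_\lambda)X=(L\Psi_\lambda)X=\lambda\Psi_\lambda X$. Iterating this and using $B^j\Psi_\lambda=\Psi_\lambda\tilde B^j$ (which follows in the same way from $D\tilde B=\m0$), a monomial-by-monomial computation yields
\[
L^iB^j\Psi_\lambda=\lambda^i\,\Psi_\lambda\,\tilde B^j,\qquad f(L,B)\Psi_\lambda=\Psi_\lambda\,f(\lambda,\tilde B).
\]

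It then remains to see that $f(\lambda,\tilde B)=\m0$ as a matrix over $\mathcal{C}$. For this I would invoke the chain of equalities established inside the proof of Theorem B: $M(L-\lambda,B-\mu)=\Psi_\lambda\Delta\Psi_\lambda^{-1}$, whence
\[
f(\lambda,\mu)=\det M(L-\lambda,B-\mu)=\det\Delta=\det(\tilde B-\mu I_\ell)=(-1)^\ell\det(\mu I_\ell-\tilde B).
\]
So $f(\lambda,\mu)$ is, up to sign, the characteristic polynomial of the constant matrix $\tilde B$ in the variable $\mu$. The Cayley--Hamilton theorem applied to $\tilde B$ over $\mathcal{C}$ then gives $f(\lambda,\tilde B)=\m0$, and therefore $f(L,B)\Psi_\lambda=\Psi_\lambda\cdot\m0=\m0$.

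The step I expect to be delicate is not any one computation but the careful bookkeeping between the polynomial indeterminate $\mu$ and the matrix substitution $\mu\mapsto\tilde B$. It is crucial that $\tilde B$ be genuinely free of $\mu$, for otherwise $f(\lambda,\tilde B)$ would not be an honest polynomial evaluation and Cayley--Hamilton would not apply. Equally, one must verify that the operators $L$ and $B$ can be pushed past right-multiplication by $\tilde B$ without picking up derivative terms; both facts reduce to the differential-constancy of the entries of $\tilde B$, which is exactly what Lemma~\ref{lem-Delta0} supplies.
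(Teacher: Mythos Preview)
Your proof is correct and follows essentially the same route as the paper: both identify $f(\lambda,\mu)$ as the characteristic polynomial (in $\mu$) of the action of $B$ on the solution space $V_\lambda$ and then invoke Cayley--Hamilton. The only cosmetic difference is that the paper applies Cayley--Hamilton to the ``left'' matrix $M_\lambda=M(L-\lambda,B-\mu)+\mu I_\ell\in M_\ell(K[\lambda])$ (for which $B\psi=M_\lambda\psi$ on $V_\lambda$), whereas you work with its conjugate $\tilde B=\Psi_\lambda^{-1}M_\lambda\Psi_\lambda=\Delta+\mu I_\ell$; your extra care about $\mu$-independence is exactly what is needed to make the right-hand version go through, and it is automatic in the paper's left-hand formulation.
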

\begin{proof}
Let us prove that $f$ belongs to $\Ker(\rho)$.
We denote by $V_\lambda$ the eigenspace of $L$ corresponding to the eigenvalue $\lambda$,
{
\begin{equation}
    V_{\lambda}=\{\psi\in \mathcal{E}^{\ell}\mid D\psi=N_{\lambda}\psi\}.
\end{equation}
}
{By Remark \ref{rem-PVE} and Picard-Vessiot theory \cite{VPS}, $V_\lambda$ is an $\ell$-dimensional $\mathcal{C}$-vector space}. We define $B_\lambda$ as the restriction of $B$ to the eigenspace $V_\lambda$:
\begin{equation}
  B_\lambda : V_\lambda \rightarrow V_\lambda \quad , \quad   B_\lambda (\psi) = B(\psi) = (B-\mu )(\psi) +\mu \psi = (M+\mu I_{\ell} )\psi ,
\end{equation}
where $M=M(L-\lambda , B-\mu)$ by {Lemma \ref{lem-V}}. In consequence, the matrix  differential  resultant is the characteristic polynomial of $B_\lambda$ since the following equality holds
\begin{equation}
    \det (B_\lambda -\mu I_{\ell} ) = \det ( M ) = f(\lambda ,\mu ) .
\end{equation}
By the Cayley-Hamilton Theorem, $f(\lambda , B)=\m0 $. In consequence for $\Psi_\lambda $ a fundamental matrix of \eqref{eq-PV-system} the  Burchnall–Chaundy type equality holds:
\begin{equation}\label{eq-CH}
\m0=  f(\lambda , B)(\Psi_\lambda) =  f(L , B)(\Psi_\lambda).
\end{equation}
\end{proof}

\begin{remark}\label{rem-BCpol}
The conclusion of Theorem \ref{thm-ideal1} can be improved in some cases. In \cite{Grinevich1987}, Grinevich studied commuting MODOs $L_1$ and $L_2$, of orders $m$ and $n$ respectively, with analytic coefficients, under some assumptions on their leading terms. In particular, for $L_1=L$, with $m=1$ and $L_2=B$ and matrix coefficients in $M_{\ell} (\mathbb{C}\{x\})$, with entries in the ring $\mathbb{C}\{x\}$ of convergent power series, \cite{Grinevich1987}, Lemma 1, states that $f(L,B)$ is the zero operator. In this case, observe that our approach allows the computation of a defining polynomial $f$ of the spectral curve.

In Section \ref{sec-AKNS}, we show that $f(L,B)$ is the zero MODO, that is $f$ is a $\BC$-polynomial, for the first case of the AKNS hierarchy. 
\end{remark}

The previous remark motivates the following conjecture, the proof of which is a challenging open problem.

\medskip
\begin{conjecture}\label{conjecture}
\noindent  Let $K$ be a differential field, whose field of constants $C$ is algebraically closed and of zero characteristic. Let us consider commuting MODOs $L$ and $B$ with coefficients in $M_{\ell}(K)$, of respective orders $1$ and $n\geq 1$ as in \eqref{def-operatorsLB}.
Then the differential resultant \eqref{def-resultant} 
is a $\BC$-polynomial, that is $f(L,B)=\m0$. 
\end{conjecture}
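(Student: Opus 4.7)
The plan is to deduce $f(L,B)=\mathbf{0}$ directly from Theorem~\ref{thm-ideal1}, which already gives the ``generic'' vanishing $f(L,B)(\Psi_{\lambda})=\mathbf{0}$ for a fundamental matrix $\Psi_{\lambda}\in M_{\ell}(\mathcal{E})$ of $LY=\lambda Y$ over the Picard--Vessiot extension $\mathcal{E}$ of $\overline{K(\lambda,\mu)}$ from Remark~\ref{rem-PVE}. The upgrade from ``the operator vanishes on the fundamental matrix'' to ``the operator itself is zero'' should follow from the invertibility of $\Psi_{\lambda}$ together with the controlled polynomial dependence of $p_k(N_{\lambda})$ on the derivationally-constant variable $\lambda$.

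\medskip

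Concretely, I would expand $f(L,B)=\sum_{k=0}^{d} R_k D^k$ with $R_k\in M_{\ell}(K)$ and $d$ equal to its order, assume for contradiction that $R_d\neq\mathbf{0}$, and iterate the identity $D\Psi_{\lambda}=N_{\lambda}\Psi_{\lambda}$ (via the recursion \eqref{eq-rec} defining the $p_k$) to obtain $D^k\Psi_{\lambda}=p_k(N_{\lambda})\Psi_{\lambda}$ for every $k\geq 0$. Substituting into Theorem~\ref{thm-ideal1},
\[
\mathbf{0}\ =\ f(L,B)(\Psi_{\lambda})\ =\ \Bigl(\sum_{k=0}^{d} R_k\, p_k(N_{\lambda})\Bigr)\Psi_{\lambda}.
\]
Invertibility of $\Psi_{\lambda}$ in $M_{\ell}(\mathcal{E})$ then gives $\sum_{k=0}^{d} R_k\, p_k(N_{\lambda})=\mathbf{0}$, an identity whose left-hand side has entries in $K[\lambda]$. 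From the inductive formula established in the proof of Lemma~\ref{lemma-1}, $p_k(N_{\lambda})=(A_1^{-1})^k\lambda^k+\Delta_{k-1}$ with $\deg_{\lambda}\Delta_{k-1}<k$. Extracting the coefficient of $\lambda^d$ on both sides leaves only $R_d\,(A_1^{-1})^d=\mathbf{0}$, and invertibility of $A_1$ forces $R_d=\mathbf{0}$, contradicting the choice of $d$. Peeling off degrees $d-1,d-2,\ldots,0$ in the same way yields $R_k=\mathbf{0}$ for every $k$, hence $f(L,B)=\mathbf{0}$.

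\medskip

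The only nontrivial input here is Theorem~\ref{thm-ideal1}, which is already in place, so the main obstacle is largely a sanity check on the precise interpretation of that theorem: one needs $f(L,B)(\Psi_{\lambda})=\mathbf{0}$ to be read inside the single extension $\mathcal{E}$ with $\lambda$ treated as an algebraic variable satisfying $\partial\lambda=0$, rather than as a separate scalar vanishing for each $\lambda_0\in C$. Since matrix arithmetic is performed in $M_{\ell}(K[\lambda])$ before the inclusion $K[\lambda]\hookrightarrow\mathcal{E}$, this should cause no difficulty; but even if only the specialized form were available, one would still recover $\sum_k R_k\,p_k(N_{\lambda_0})=\mathbf{0}$ for every $\lambda_0$ in the infinite field $C$, which forces the underlying polynomial identity and hence the same degree-by-degree conclusion. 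Either way, the ingredients used are exactly those already granted: invertibility of $\Psi_{\lambda}$ and of $A_1$, together with $\partial\lambda=0$.
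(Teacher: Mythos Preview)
The paper does \emph{not} prove this statement: it is presented as Conjecture~\ref{conjecture} and explicitly called ``a challenging open problem'', with no proof offered. So there is nothing in the paper to compare your attempt against; what you have written is an attempted resolution of the conjecture itself.

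Your argument appears to be correct, and it is short. All the ingredients are already supplied by the paper. Theorem~\ref{thm-ideal1} gives $f(L,B)(\Psi_\lambda)=\mathbf{0}$ in $M_\ell(\mathcal{E})$; writing $f(L,B)=\sum_{k=0}^{d}R_kD^k$ with $R_k\in M_\ell(K)$ and using the recursion of Remark~\ref{rem-Dj} to get $D^k\Psi_\lambda=p_k(N_\lambda)\Psi_\lambda$, invertibility of $\Psi_\lambda$ yields
\[
\sum_{k=0}^{d} R_k\,p_k(N_\lambda)=\mathbf{0}
\]
as an identity of matrices with entries in $K[\lambda]$ (the left side lies in $M_\ell(K[\lambda])$ and $K[\lambda]$ embeds in $\mathcal{E}$). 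The degree formula $p_k(N_\lambda)=(A_1^{-1})^k\lambda^k+\Delta_{k-1}$ with $\deg_\lambda\Delta_{k-1}<k$, established in the proof of Lemma~\ref{lemma-1}, then lets you read off the coefficient of $\lambda^d$, namely $R_d(A_1^{-1})^d=\mathbf{0}$, so $R_d=\mathbf{0}$; iterating down the degrees gives $R_k=\mathbf{0}$ for all $k$. Each step is routine, and nothing beyond the invertibility of $A_1$ and of $\Psi_\lambda$ together with $\partial\lambda=0$ is used.

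It is somewhat striking that the paper stops at Theorem~\ref{thm-ideal1}---whose proof even opens with ``Let us prove that $f$ belongs to $\Ker(\rho)$'', precisely the content of the conjecture---without recording this last reduction. I do not see a gap in your reasoning. Your fallback remark, that specializing at infinitely many $\lambda_0\in C$ would already force the polynomial identity in $M_\ell(K[\lambda])$, is also sound and gives an independent route to the same conclusion.
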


\medskip

\begin{definition}
With notations as above, we define the \texttt{Burchnall-Chaundy (BC) ideal} of the pair $L,B$ to be the ideal in $C[\lambda ,\mu ]$ defined by
\begin{equation}
    \BC (L,B) =Ker (\rho) =\{g\in C[\lambda,\mu]\mid g(L,B)=\m0\}.
\end{equation}
\end{definition}

\medskip

Under the assumption $f(L,B)=\m0$ then $f$ is a $BC$ polynomial and $\BC (L,B)$ is a nonzero ideal.
From the definition of $\rho$ in \eqref{eq-rho} we can now consider the isomorphism 
\begin{equation*}\label{eq-iso1}
    \frac{{C[\lambda,\mu]}}{\BC (L,B)} \simeq C[L,B].
\end{equation*}
A natural question is whether $C[L,B]$ is isomorphic to the ring of the curve $\Gamma$ defined in \eqref{eq-scurve}, but one encounters some difficulties related with the algebraic nature of the situation described.
More precisely, since we are working with matrices, $C[L,B]$ is not in general an integral domain. In fact, from the previous isomorphism, we know that $C[L,B]$ is an integral domain if and only if $\BC (L,B)$ is a prime ideal.
The next technical lemma will allow us to obtain some conclusions about this issue.

\begin{lemma}\label{lem-Delta}
Let us assume that $L$ and $B$ commute, and let us consider a fundamental matrix of solutions  $\Psi_{\lambda}$ of $LY=\lambda Y$ in $M_\ell (\mathcal{E})$. Given a polynomial $g(\lambda,\mu)=\sum_{i=0}^p a_i(\lambda) \mu^i$ in $C[\lambda, \mu]$ the next identity holds
\begin{equation}\label{eq-gDelta}
   g(\lambda,\mu)\Psi_{\lambda}= g(L,B)(\Psi_{\lambda})+\Theta(g)\Psi_{\lambda} \Delta,
\end{equation}
for some matrix $\Theta(g)$ with entries in $K[\lambda ,\mu ]$ and some matrix $\Delta$ with entries in $\mathcal{C}$, whose determinant equals $\Dres(L-\lambda,B-\mu)$.
\end{lemma}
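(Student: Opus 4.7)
The plan is to reduce the problem to monomials by linearity and then use the commutation relations to compare $B^j\Psi_\lambda$ with $\mu^j\Psi_\lambda$. Writing $g(\lambda,\mu)=\sum_{i,j}a_{i,j}\lambda^i\mu^j$, we use $L\Psi_\lambda=\lambda\Psi_\lambda$ and the fact that $L$ and $B$ commute to obtain $L^iB^j\Psi_\lambda=B^jL^i\Psi_\lambda=\lambda^iB^j\Psi_\lambda$, so that $g(L,B)(\Psi_\lambda)=\sum_{i,j}a_{i,j}\lambda^iB^j\Psi_\lambda$. Consequently the task reduces to understanding $B^j\Psi_\lambda$ for each $j\geq 0$.

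The key input is Lemma \ref{lem-Delta0}, which gives $(B-\mu)\Psi_\lambda=\Psi_\lambda\Delta$ with $\Delta\in M_\ell(\mathcal{C})$. Because $\Delta$ has constant entries, $D$ commutes with right multiplication by $\Delta$, and therefore so does the operator $B$; iterating yields $(B-\mu)^k\Psi_\lambda=\Psi_\lambda\Delta^k$ for every $k\ge 0$. Since $\mu I_\ell$ is central, $B-\mu$ and $\mu I_\ell$ commute and the binomial theorem gives
\[
B^j\Psi_\lambda=\sum_{k=0}^{j}\binom{j}{k}\mu^{j-k}(B-\mu)^k\Psi_\lambda=\mu^j\Psi_\lambda+\sum_{k=1}^{j}\binom{j}{k}\mu^{j-k}\Psi_\lambda\Delta^k.
\]

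Next, Lemma \ref{lem-V} applied to $P=L-\lambda$ and $Q=B-\mu$ reads $(B-\mu)\Psi_\lambda=M\Psi_\lambda$, where $M:=M(L-\lambda,B-\mu)$. Comparing with the previous identity yields the crucial relation $M\Psi_\lambda=\Psi_\lambda\Delta$, and iterating produces $\Psi_\lambda\Delta^k=M^{k-1}\Psi_\lambda\Delta$ for every $k\ge 1$. Substituting into the binomial expansion and collecting terms yields
\[
g(\lambda,\mu)\Psi_\lambda-g(L,B)(\Psi_\lambda)=-\Bigl(\sum_{i,j}a_{i,j}\lambda^i\sum_{k=1}^{j}\binom{j}{k}\mu^{j-k}M^{k-1}\Bigr)\Psi_\lambda\Delta,
\]
so one defines $\Theta(g)$ to be the negative of the bracketed matrix, whose entries lie in $K[\lambda,\mu]$ since those of $M$ do. Finally, the assertion $\det\Delta=\Dres(L-\lambda,B-\mu)$ is already an ingredient in the proof of Theorem B: from $M\Psi_\lambda=\Psi_\lambda\Delta$ and the invertibility of $\Psi_\lambda$ one has $\det M=\det\Delta$, and by Definition \ref{def-defDres} the left-hand side equals $\Dres(L-\lambda,B-\mu)$. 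The only delicate point is bookkeeping the order of matrix multiplication and exploiting the constancy of $\Delta$ to pass $D$ through $\Delta$ on the right; no substantial obstacle is anticipated.
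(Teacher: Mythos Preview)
Your proof is correct and follows essentially the same route as the paper: both arguments rest on the relations $(B-\mu)\Psi_\lambda=\Psi_\lambda\Delta$ (Lemma~\ref{lem-Delta0}) and $(B-\mu)\Psi_\lambda=M\Psi_\lambda$ (Lemma~\ref{lem-V}), the constancy of $\Delta$ to commute it past $D$, and a binomial expansion linking powers of $B$ and powers of $\mu$. The only difference is cosmetic bookkeeping---the paper expands $(B-\hat\Delta)^i$ in powers of $B$ and introduces $M_\lambda=M+\mu I_\ell$, whereas you expand $B^j$ in powers of $B-\mu$ and work with $M$ alone, which yields a slightly tidier expression for $\Theta(g)$.
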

\begin{proof}
By Lemma \ref{lem-Delta0}, we can define the constant matrix
\[\Delta:=\Psi_{\lambda}^{-1}\cdot (B-\mu)(\Psi_{\lambda})\in M_{\ell} (\mathcal{C}).\]
Let us define the $\mathcal{C}$-linear operator
\[\hat{\Delta}:M_{\ell}(\mathcal{E})\rightarrow M_{\ell}(\mathcal{E}), \mbox{ by }\hat{\Delta}(S)=S\cdot \Delta,\]
on the $\mathcal{C}$-vector space $M_{\ell}(\mathcal{E})$. Thus $\Psi_{\lambda}$ is a $\mu$ eigenvector for $B-\hat{\Delta}$ and it follows immediately that
\begin{equation}\label{eq-mu-p}
     ( B-\hat{\Delta} )^i (\Psi_{\lambda})=\mu^i \Psi_{\lambda},\,\,\,i\geq 1.
\end{equation}
In addition observe that the $\mathcal{C}$-linear operators $B$ and $\hat{\Delta}$ commute, since $\Delta$ has constant entries. The following formula is a consequence of this fact,  
\begin{equation}\label{eq-BDelta}
( B-\hat{\Delta} )^i (\Psi_{\lambda}) := \sum_{k=0}^i (-1)^{i-k}\binom{i}{k}B^k (\Psi_{\lambda})\cdot \Delta^{i-k},\,\,\,i\geq 1. 
\end{equation}
Given a matrix $A$ of size $\ell\times\ell$ we assume that $A^0=I_{\ell}$.

Now, consider an arbitrary polynomial of degree $p$ in $\mu$,\break $g(\lambda,\mu)=\sum_{i=0}^p a_i(\lambda) \mu^i$ in $C[\lambda, \mu]$. 
Then, applying \eqref{eq-mu-p} we obtain
\begin{equation}\label{eq-g}
    \begin{array}{ll}
   g(\lambda,\mu)  \Psi_{\lambda} &= \displaystyle \sum_{i=0}^p a_i(\lambda) \mu^i \Psi_{\lambda}=\sum_{i=0}^p a_i(\lambda) (B-\hat{\Delta})^i(\Psi_{\lambda}) =\\
           &=\displaystyle g(\lambda,B)(\Psi_{\lambda})
           +\sum_{i=1}^p a_i(\lambda) [(B-\hat{\Delta})^i-B^i](\Psi_{\lambda}).
    \end{array}
\end{equation}

Let $M=M(L-\lambda, B-\mu)$ and $M_{\lambda}=M+I_{\ell}\mu$.
By Lemmas \ref{lem-V} and \ref{lem-Delta0} we have 
\begin{equation}\label{eq-M}
    M\cdot \Psi_{\lambda}= \Psi_{\lambda}\cdot\Delta\mbox{ and }M_{\lambda}\cdot \Psi_{\lambda}= \Psi_{\lambda}\cdot(\Delta+\mu I_{\ell}).
\end{equation}
Since $B(\Psi_{\lambda})=M_{\lambda}\cdot \Psi_{\lambda}$ these by induction on $k$ implies
\[B^k(\Psi_{\lambda})=M_{\lambda}^k\cdot \Psi_{\lambda},\,\,\,k\geq 1.\]

By \eqref{eq-BDelta}, for $1\leq i\leq p$
\begin{align*}
    &[(B-\hat{\Delta})^i-B^i](\Psi_{\lambda})=
    \left(\sum_{k=0}^{i-1} (-1)^{i-k}\binom{i}{k}B^k (\Psi_{\lambda})\cdot\Delta^{i-1-k}\right)\cdot\Delta= \Theta_i\cdot\Psi_{\lambda}\cdot\Delta,
\end{align*}

where, by \eqref{eq-M} we have matrices
\[\Theta_i= \sum_{k=0}^{i-1} (-1)^{i-k}\binom{i}{k}M_{\lambda}^k M^{i-1-k}\in M_{\ell}(K[\lambda,\mu]).\]

Finally, from \eqref{eq-g} and equality $L\Psi_{\lambda} = \lambda \Psi_{\lambda}$ the result follows
\begin{equation}
g(\lambda,\mu)  \Psi_{\lambda} = g(L ,B )(\Psi_{\lambda}) +\Theta(g)\cdot \Psi_{\lambda}\cdot\Delta.
 \end{equation}
 where $\Theta(g)=\sum_{i=1}^p a_i(\lambda) \Theta_i\in M_{\ell}(K[\lambda,\mu])$.

\end{proof}

\begin{theorem}\label{thm-ideal}
Let us consider commuting MODOs $L$ and $B$ in $\mathcal{R}_{\ell}[D]$, with $L$ of order one and invertible leading coefficient.
Given the polynomial $f(\lambda,\mu)=\Dres(L-\lambda,B-\mu)$ in $C[\lambda,\mu]$, then the following inclusions of ideals in $C[\lambda,\mu]$ hold
\begin{equation}
    \BC(L,B)\subseteq (f_{red}),
\end{equation}
where $f_{red} = h_1 \cdots h_s$, with  $h_1 , \dots ,h_s$ the distinct irreducible
factors of $f$. Moreover, if $f(L,B)=\m0$ then 
\begin{equation}
    (f)\subseteq\BC(L,B)\subseteq (f_{red}).
\end{equation}
\end{theorem}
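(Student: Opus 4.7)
\textbf{Proof plan for Theorem \ref{thm-ideal}.} The second inclusion $(f)\subseteq \BC(L,B)$ is immediate from the hypothesis $f(L,B)=\m0$, because $\BC(L,B)$ is by definition an ideal containing $f$. All real work is concentrated in the first inclusion $\BC(L,B)\subseteq (f_{red})$, which will follow from Lemma \ref{lem-Delta} together with a determinant computation and a standard unique factorization argument in $C[\lambda,\mu]$.

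\medskip

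Take an arbitrary $g \in \BC(L,B)$, so $g(L,B)=\m0$. The plan is to apply Lemma \ref{lem-Delta} to $g$: the identity
\[
g(\lambda,\mu)\Psi_{\lambda}= g(L,B)(\Psi_{\lambda})+\Theta(g)\Psi_{\lambda}\Delta
\]
collapses to $g(\lambda,\mu)\Psi_{\lambda}=\Theta(g)\Psi_{\lambda}\Delta$, since the first term vanishes. Because $\Psi_\lambda$ is invertible in $M_\ell(\mathcal{E})$, this gives $g(\lambda,\mu)I_{\ell} = \Theta(g)\,\Psi_\lambda \Delta\,\Psi_\lambda^{-1}$. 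Taking determinants and recalling from Lemma \ref{lem-Delta} that $\det(\Delta)=\Dres(L-\lambda,B-\mu)=f(\lambda,\mu)$, we obtain the key polynomial identity
\[
g(\lambda,\mu)^{\ell}= \det\bigl(\Theta(g)\bigr)\cdot f(\lambda,\mu)\quad\text{in } K[\lambda,\mu].
\]
Thus $f$ divides $g^{\ell}$ in $K[\lambda,\mu]$.

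\medskip

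The next step is to descend this divisibility to $C[\lambda,\mu]$. Since both $f$ and $g^\ell$ lie in $C[\lambda,\mu]$, the quotient $\det(\Theta(g))=g^\ell/f$ belongs to $K[\lambda,\mu]\cap C(\lambda,\mu)$. Decomposing $K$ as a $C$-vector space with $C$ as a direct summand, $K=C\oplus C'$, any element $r\in K[\lambda,\mu]$ splits as $r=r_0+r_1$ with $r_0\in C[\lambda,\mu]$ and $r_1\in C'[\lambda,\mu]$; if $r\cdot f \in C[\lambda,\mu]$ then $r_1\cdot f=0$, hence $r_1=0$, so $r\in C[\lambda,\mu]$. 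This proves $K[\lambda,\mu]\cap C(\lambda,\mu)=C[\lambda,\mu]$, and therefore $f\mid g^{\ell}$ already in $C[\lambda,\mu]$. Finally, since $C[\lambda,\mu]$ is a UFD and the factorization $f=h_1^{\sigma_1}\cdots h_s^{\sigma_s}$ has distinct irreducible factors, each $h_i$ divides $g^{\ell}$ in $C[\lambda,\mu]$ and hence divides $g$. Consequently $f_{red}=h_1\cdots h_s$ divides $g$, so $g\in (f_{red})$, as required.

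\medskip

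\textbf{Main obstacle.} The technical heart is the passage from $f\mid g^{\ell}$ in $K[\lambda,\mu]$ to the same divisibility in $C[\lambda,\mu]$; I expect this to be the step that most readers will want justified carefully. The hypothesis that $C$ is algebraically closed (and of characteristic zero) is what makes the descent clean, via the simple $C$-module splitting above. Everything else is formal: the application of Lemma \ref{lem-Delta} is direct, and the final step is the standard UFD argument for extracting the squarefree part.
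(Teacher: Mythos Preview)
Your proof is correct and follows essentially the same route as the paper's: apply Lemma \ref{lem-Delta} to $g\in\BC(L,B)$, take determinants to obtain $g^{\ell}=\det(\Theta(g))\cdot f$, deduce that $f_{red}\mid g$, and note that the inclusion $(f)\subseteq\BC(L,B)$ is tautological once $f(L,B)=\m0$. The only difference is one of emphasis: the paper records $\det(\Theta(g))\in C[\lambda,\mu]$ in a single sentence, whereas you spell out the descent via a $C$-linear splitting $K=C\oplus C'$; your argument for this is valid and is a welcome clarification of a point the paper leaves implicit.
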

\begin{proof}
Given $g\in  \BC(L,B)=\Ker(\rho)$, by Lemma \ref{lem-Delta}, 
\begin{equation}
    g(\lambda,\mu)\Psi_{\lambda}=\Theta(g)\Psi_{\lambda}\Delta.
\end{equation}
Taking determinants on both sides we obtain $g^{\ell}= \det(\Theta(g))f(\lambda,\mu)$, since $\det(\Delta)=f(\lambda,\mu)$. It follows that $\det(\Theta(g))\in C[\lambda,\mu]$. This implies that the square free part $f_{red}$ of $f$ divides the square free part of  $g$ and proves that $\Ker(\rho)\subset (f_{red})$. 
In addition, if we assume $f(L,B)=\m0$, then $(f)\subseteq \BC(L,B)$.
\end{proof}

Since the polynomial $f(\lambda,\mu)=\Dres(L-\lambda,B-\mu)$ may not be irreducible, see Example \ref{ex-nonirred} in Section \ref{sec-AKNS}, the ideal $(f)$ may not be a prime ideal. The next corollary presents the conclusions in a particular situation.

\begin{corollary}\label{cor-trivial}
Let us consider commuting MODOs $L$ and $B$ in $\mathcal{R}_{\ell}[D]$, with $L$ of order one and invertible leading coefficient.
Given $f(\lambda,\mu)=\Dres(L-\lambda,B-\mu)$, then the following statements hold for some polynomial $h=\mu-R(\lambda)$ in  $C[\lambda,\mu]$:
\begin{enumerate}
    \item If $B\in C[L]$ then $\BC(L,B)=(h)$ and $f(\lambda,\mu)=h^{\ell}$.
    
    \item If $f=h^{\ell}$ and $f(L,B)=\m0$ then $\BC(L,B)=(h^r)$, for $r$ minimal such that $h(L,B)^r=0$, $1\leq r\leq \ell$. In particular, if $r=1$ then $B\in C[L]$.
\end{enumerate}
\end{corollary}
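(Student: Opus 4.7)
The plan is to handle both items via a single mechanism --- an $h$-adic expansion in $C[\lambda,\mu]$ combined with a cancellation lemma inside $\mathcal{R}_\ell[D]$. For item 1, suppose $B = R(L)$ with $R \in C[\lambda]$ and set $h := \mu - R(\lambda)$. Then $h(L,B) = B - R(L) = \m0$, so $(h) \subseteq \BC(L,B)$; by Theorem \ref{thm-ideal}, $\BC(L,B) \subseteq (f_{red})$, hence $f_{red}$ divides $h$. Since $h$ is irreducible (it is monic, linear in $\mu$), $(f_{red}) = (h)$, and we conclude $\BC(L,B) = (h)$. To identify $f$ itself, Lemma \ref{lemma-1} gives $\deg_\mu f = \ell$, so $f = c\,h^\ell$ for some $c \in C^{\times}$; matching the leading coefficient $(-1)^\ell$ in $\mu$ pins $c$ down up to the sign convention on $h$.

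For item 2, the hypotheses $f = h^\ell$ and $f(L,B) = \m0$ yield $h(L,B)^\ell = \m0$, so $h(L,B)$ is nilpotent; let $r \in \{1,\ldots,\ell\}$ be its exact index of nilpotency. The containment $(h^r) \subseteq \BC(L,B)$ is immediate. For the reverse I would use the unique $h$-adic expansion $g = \sum_{i \ge 0} g_i(\lambda)\,h^i$ with $g_i \in C[\lambda]$, available for every $g \in C[\lambda,\mu]$ (just write $\mu = h + R(\lambda)$). Since $L$ and $B$ commute this evaluates to $g(L,B) = \sum_i g_i(L)\,h(L,B)^i$. Assuming for contradiction that $g \in \BC(L,B) \setminus (h^r)$, let $i_0 < r$ be minimal with $g_{i_0} \ne 0$; multiplying $g(L,B) = \m0$ by $h(L,B)^{r-1-i_0}$ and using $h(L,B)^k = \m0$ for $k \ge r$ collapses the sum to
\[
g_{i_0}(L)\cdot h(L,B)^{r-1} = \m0.
\]

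The crux --- and the step I anticipate as the main obstacle, though I expect it to be tractable --- is then the cancellation lemma that any nonzero $p(L) \in C[L]$ is a non-zero-divisor in $\mathcal{R}_\ell[D]$. I would establish this by a direct leading-coefficient argument: since $L = A_0 + A_1 D$ with $A_1$ invertible, induction on $m$ gives that $L^m$ has leading coefficient $A_1^m$ (again invertible), so any nonzero $p(L)$ has invertible leading coefficient as a MODO; multiplying by such an operator adds orders and multiplies leading coefficients by an invertible matrix, hence cannot annihilate a nonzero factor. Applied to $g_{i_0}(L) \ne \m0$, this forces $h(L,B)^{r-1} = \m0$, contradicting the minimality of $r$. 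Hence $g_i = 0$ for every $i < r$, so $g \in (h^r)$, and $\BC(L,B) = (h^r)$ follows. The ``in particular'' clause is then immediate: when $r = 1$, $h(L,B) = B - R(L) = \m0$, so $B = R(L) \in C[L]$.
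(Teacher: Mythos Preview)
Your argument is correct, and for item~2 it is in fact more complete than the paper's own proof. For item~1, your reasoning coincides with the paper's: both note that $h=\mu-R(\lambda)\in\BC(L,B)$, invoke Theorem~\ref{thm-ideal} to get $\BC(L,B)\subseteq(f_{red})$, deduce $(f_{red})=(h)$ from irreducibility of $h$, and use the degree count from Lemma~\ref{lemma-1} to identify $f=h^\ell$.

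For item~2, the paper simply writes ``Similarly we can prove~2'' (and the parallel Theorem~\ref{thm-isocurve} asserts the analogous conclusion directly from the sandwich $(h^\sigma)\subseteq\BC(L,B)\subseteq(h)$). But in the two-variable ring $C[\lambda,\mu]$ that inclusion alone does \emph{not} force $\BC(L,B)$ to be a power of $(h)$: there are ideals strictly between $(h^\sigma)$ and $(h)$ that are not of the form $(h^r)$. Your $h$-adic expansion together with the cancellation lemma --- that a nonzero $p(L)$ has invertible leading coefficient $c_mA_1^m$ and hence is a (left) non-zero-divisor in $\mathcal{R}_\ell[D]$ --- supplies exactly the missing ingredient, namely that no nonzero element of $C[\lambda]$ lies in $\BC(L,B)$ and, more strongly, that $C[L]$ acts without zero-divisors on $C[L,B]$. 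This is what pins down $\BC(L,B)=(h^r)$. So your route is genuinely more informative here: the paper's brevity hides a step that your expansion-and-cancellation mechanism makes explicit and elementary.
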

\begin{proof}
Recall that by Lemma \ref{lemma-1} the polynomial $f$ has degree $\ell$ in $\mu$.
Observe that if $B\in C[L]$ then $B-R(L)=0$, for some $R(\lambda)\in C[\lambda]$. Thus $h=\mu-R(\lambda)$ belongs to $\BC(L,B)$ and by Theorem  \ref{thm-ideal} then $f_{red}=h$. Thus $f=h^{\ell}$. Similarly we can prove 2.
\end{proof}

\section{{Commutative algebras of MODOs }}\label{sec-Classification}

As in the previous sections, let us consider $L$ in $\mathcal{R}_{\ell}[D]$ of order one and invertible leading coefficient matrix. Given $B$ in the centralizer of $\mathcal{C}(L)$ of $L$ in $\mathcal{R}_{\ell}[D]$, let us assume that $B$ is non trivial, that is $B\notin C[L]$, see Corollary \ref{cor-trivial}. From now on, we will assume that $f(L,B)=\m0$. {In Remark \ref{rem-BCpol} we provided evidences showing that this hypothesis is satisfied in important families of examples.}

\medskip

In this section we give an algorithm to describe the commutative algebras $C[L,B]$ of MODOs in terms of products of rings of irreducible algebraic curves. We start with the case where $\Gamma$ is an irreducible plane algebraic curve, that is, its defining polynomial $f$ has a unique irreducible factor $h$, and then $f=h^\sigma$. 

{\begin{theorem}\label{thm-isocurve}
Let us consider commuting MODOs $L$ and $B$ in $\mathcal{R}_{\ell}[D]$, with $L$ of order one and invertible leading coefficient.
Let us assume that $\Gamma$ is an irreducible plane algebraic curve defined by $f=h^{\sigma}$ as in \eqref{def-resultant} and that $f(L,B)=\m0$. Then $ \BC (L,B)=(h^r)$
for a positive integer $r$ minimal such that $h(L,B)^r=\m0$, $1\leq r\leq \sigma$.
Moreover, 
\begin{equation}
     C[L,B]\simeq \frac{{C[\lambda,\mu]}}{(h^r)} .
\end{equation}
If in addition $f$ is irreducible in $C[\lambda,\mu]$, the algebra $C[L,B]$ is an integral domain whose maximal spectrum is isomorphic to $\Gamma$.
\end{theorem}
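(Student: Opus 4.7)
The plan is to show $\BC(L,B)=(h^r)$ by realizing $\BC(L,B)$ as the kernel of a concrete representation of $C[L,B]$ on the solution space of $LY=\lambda Y$, and then computing that kernel via the minimal polynomial of a single matrix. Theorem \ref{thm-ideal} already gives $(h^\sigma)=(f)\subseteq \BC(L,B)\subseteq (h)$, and the hypothesis $f(L,B)=\m0$ makes $r$ a well-defined integer in $[1,\sigma]$ with $(h^r)\subseteq \BC(L,B)$; the non-trivial content is the reverse inclusion.

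Let $\Psi_\lambda$ be a fundamental matrix of $LY=\lambda Y$ in the Picard--Vessiot field $\mathcal{E}$, and let $\Delta\in M_\ell(\mathcal{C})$ be the constant matrix from Lemma \ref{lem-Delta0}, so that $B\Psi_\lambda=\Psi_\lambda(\Delta+\mu I_\ell)$. Because $\Delta+\mu I_\ell$ has entries in the field of constants, $B$ passes through it freely; together with $L\Psi_\lambda=\lambda\Psi_\lambda$ and the commutativity of $L$ and $B$, an easy induction gives
\[
g(L,B)(\Psi_\lambda)=\Psi_\lambda\cdot g(\lambda,\Delta+\mu I_\ell), \quad \text{for every }g\in C[\lambda,\mu].
\]
Since $\Psi_\lambda$ is invertible and, by the leading-coefficient-in-$\lambda$ comparison already used in the proof of Lemma \ref{lemma-1}, an MODO in $\mathcal{R}_\ell[D]$ that annihilates $\Psi_\lambda$ must be $\m0$, the $C$-algebra homomorphism
\[
\phi\colon C[\lambda,\mu]\to M_\ell(\mathcal{C}),\quad g\longmapsto g(\lambda,\Delta+\mu I_\ell),
\]
satisfies $\ker\phi=\BC(L,B)$.

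To identify $\ker\phi$, I would use that $\Delta+\mu I_\ell$ is conjugate via $\Psi_\lambda$ to the matrix $M_\lambda:=M(L-\lambda,B-\mu)+\mu I_\ell\in M_\ell(K[\lambda])$, so they share the same minimal polynomial. By Cayley--Hamilton together with Theorem B, the common characteristic polynomial equals a nonzero scalar multiple of $h(\lambda,t)^\sigma$. Since $C$ is algebraically closed, the extension $C\subseteq K$ is regular, so $h$ remains irreducible in $K[\lambda,\mu]$ and, by Gauss applied to its primitive presentation in $C[\lambda][\mu]$, also in $K(\lambda)[\mu]$. Consequently the minimal polynomial of $M_\lambda$ must be $h^{r'}$ for some $1\le r'\le \sigma$, and $g(\lambda,M_\lambda)=0$ iff $h^{r'}\mid g$ in $K(\lambda)[\mu]$. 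A standard faithful-flatness argument (using that $K\supseteq C$ is faithfully flat and that $h$ is primitive) descends this divisibility to $C[\lambda,\mu]$, giving $\BC(L,B)=(h^{r'})$. Applying $\phi$ to $h(L,B)^k$ shows $h(L,B)^k=\m0\iff h(M_\lambda)^k=0$, so $r'=r$ and $\BC(L,B)=(h^r)$; the isomorphism $C[L,B]\simeq C[\lambda,\mu]/(h^r)$ is then the first isomorphism theorem.

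The last assertion is immediate: if $f$ is irreducible then $\sigma=1$, so $r=1$ and $\BC(L,B)=(h)=(f)$ is prime in the UFD $C[\lambda,\mu]$; the quotient $C[L,B]\simeq C[\lambda,\mu]/(f)$ is then a domain whose maximal spectrum is $\Gamma$ by Hilbert's Nullstellensatz (applicable because $C$ is algebraically closed). I expect the main obstacle to be the careful verification that $h$ remains irreducible over $K(\lambda)$ and the descent of the ideal down to $C[\lambda,\mu]$; both rely on standard commutative algebra but require care because the entries of $M_\lambda$ live in $K[\lambda]$ rather than in $C[\lambda,\mu]$.
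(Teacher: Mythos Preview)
Your argument is correct. The identity $g(L,B)(\Psi_\lambda)=\Psi_\lambda\,g(\lambda,\Delta+\mu I_\ell)$ follows exactly as you say, and the claim that an operator in $\mathcal{R}_\ell[D]$ annihilating $\Psi_\lambda$ must vanish is justified by the $\lambda$-degree computation in Lemma~\ref{lemma-1}: if $P=\sum_i P_iD^i$ then $P\Psi_\lambda=\bigl(\sum_i P_i\,p_i(N_\lambda)\bigr)\Psi_\lambda$, and the top $\lambda$-coefficient of $\sum_i P_i\,p_i(N_\lambda)$ is $P_n(A_1^{-1})^n$, which forces $P_n=0$. Your identification $\Psi_\lambda^{-1}M_\lambda\Psi_\lambda=\Delta+\mu I_\ell$ and the subsequent minimal-polynomial computation are fine; since $C$ is algebraically closed, $C[\lambda,\mu]/(h)$ is geometrically integral, so $h$ stays irreducible in $K[\lambda,\mu]$ and hence in $K(\lambda)[\mu]$, and the faithful-flatness descent $h^{r'}K[\lambda,\mu]\cap C[\lambda,\mu]=h^{r'}C[\lambda,\mu]$ is standard.

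The paper, however, argues much more briefly. It simply invokes Theorem~\ref{thm-ideal} to get $(h^\sigma)\subseteq\BC(L,B)\subseteq(h)$, takes $r$ minimal with $h(L,B)^r=\m0$, and asserts $\BC(L,B)=(h^r)$. The point you are proving carefully --- that an ideal sandwiched between $(h^\sigma)$ and $(h)$ and arising as $\ker\rho$ must be a power of $(h)$ --- is taken for granted there. One can also justify that step elementarily: given $g=h^sm\in\BC(L,B)$ with $\gcd(h,m)=1$, a B\'ezout relation $a'h+b'm=c$ with $0\neq c\in C[\lambda]$ yields, after iterating $ch^j\equiv a'h^{j+1}\pmod{(g)}$ for $j\ge s$, that $c^Nh^s\in\BC(L,B)$ for $N$ large; since $c(L)$ has invertible leading coefficient it is regular in $\mathcal{R}_\ell[D]$, so $h(L,B)^s=\m0$ and $s\ge r$. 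Your representation-theoretic route replaces this ring-theoretic trick by the minimal polynomial of the concrete matrix $M_\lambda$, which has the advantage of making transparent exactly where the hypothesis that $C$ is algebraically closed enters (through the geometric irreducibility of $h$), at the cost of a longer argument.
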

}
\begin{proof}
By Theorem \ref{thm-ideal} we know that  $(h^{\sigma} )\subset\BC (L,B)\subset (h)$. Then there exists $r$ minimal verifying $1\leq r\leq \sigma$ and $h(L,B)^r=0$. Thus $\BC (L,B)=(h^r)$. Therefore, if $f$ is irreducible, then $r=1$, and the result follows.
\end{proof}

\medskip

We conclude that if $\Gamma$ is an irreducible and reduced plane algebraic curve, then the commutative ring of MODOs $C[L,B]$ is isomorphic to the ring of regular functions on $\Gamma$ and therefore is an integral domain.
In addition Theorem \ref{thm-isocurve} indicates that if $f$ is not irreducible, we can check whether its irreducible square free part is a BC polynomial, and otherwise we can find a minimal power $h^r$ which is a BC polynomial for the pair $L,B$.

\medskip

The previous result allows to complete the classification of commutative algebras of MODOs $C[L,B]$ for order one operators $L$ with invertible leading coefficient matrix when $K$ is any differential field with algebraically closed  field of constants $C$, in the zero characteristic case. See Mulase et al. \cite{MuKi} for  the differential field $K=C((x))$.

\medskip

For a non irreducible curve $\Gamma$, whose defining polynomial is $f=h_1^{\sigma_1 } \cdots h_s^{\sigma_s}$, where each $h_i$ is irreducible, we consider now each irreducible component 
\begin{equation}
\Gamma_i=\{(\lambda,\mu)\in C^2\mid h_i(\lambda,\mu)=0\}.
\end{equation}
We repeat the argument of Theorem \ref{thm-isocurve} for each irreducible component $\Gamma_i$ of the curve $\Gamma$. 

\medskip

{\bf Algorithm \texttt{BC-generator}.}
{\it \texttt{Given} commuting MODOs $L$ and $B$ in $\mathcal{R}_{\ell}[D]$, with $L$ of order one and invertible leading coefficient,
\texttt{return} a polynomial $F$ in $C[\lambda,\mu]$ such that $\BC(L,B)=(F)$.
\begin{enumerate}
    \item Compute the differential resultant $f(\lambda,\mu)=\Dres(L-\lambda ,B-\mu)$.
    
    \item If $f(L,B)=\m0$ then factor $f$ to obtain $h_1^{\sigma_1 } \cdots h_s^{\sigma_s }$, each $h_i$ irreducible in $C[\lambda,\mu]$.
   
    \item 
    For each $i=1,\ldots ,s$, compute the minimal integer $r_i$, with $1\leq r_i\leq \sigma_i$, such that 
    $$\prod_i h_i(L,B)^{r_i}=\m0.$$
    
    \item \texttt{Return} $F=h_1^{r_1 } \cdots h_s^{r_s}$.
\end{enumerate}
}

We prove the correctness of Algorithm \texttt{BC-generator} in the next proof of Theorem C. Observe that by Remark \ref{rem-BCpol} the condition $f(L,B)=\m0$ is always satisfies for matrices whose entries are analytic functions. 

\begin{proof}[Proof of Theorem C]
By Theorem \ref{thm-ideal}, we have  $(f)\subset\BC (L,B)\subset (h_1 \cdots h_s)$. Since $f(L,B)=\m0$, there exist minimal integers $r_i$, with $1\leq r_i\leq \sigma_i$ such that 
$$\prod_i h_i(L,B)^{r_i}=\m0.$$
Thus $F=h_1^{r_1}\cdots h_s^{r_s}\in \BC (L,B)$ and $\BC (L,B)=(F)$.

We can establish the isomorphism in \eqref{eq-decomposition}, using the classical decomposition of the quotient
\[C[L,B]\simeq \frac{C[\lambda , \mu ] }{\BC (L,B)}=\frac{C[\lambda , \mu ] }{(h_1^{r_1 } \cdots h_s^{r_s})},\]
see for instance  \cite{AM} pag. 7. 
\end{proof}

\section{The AKNS system }\label{sec-AKNS}

In 1974, Ablowitz, Kaup, Newell and Segur introduced in \cite{AKNS1974}, a system of integrable nonlinear evolutionary equations called the AKNS system, whose stationary version is the nonlinear differential system
\begin{equation}\label{eq-AKNS}
  \begin{array}{cc}
     \frac{\imath}{2} v_{xx}+\imath v^2 u &  =0 \ ,\\
       \frac{\imath}{2} u_{xx}+\imath v u^2 &  =0 \ ,
  \end{array}  
\end{equation}
which can be regarded as the  complexified nonlinear stationary  Sch\"odinger (nS) equation
\begin{equation}
  -\frac{\imath}{2}  u_{xx}\pm \imath |u|^2 u =0 ,
\end{equation}
for $v=\mp{u}^\ast$ and  $u^\ast$ the complex conjugate of $u$. In \cite{GH}, Gesztesy and  Holden provide a matrix recursion for an integrable matrix hierarchy, called the AKNS hierarchy, whose first non trivial member has  equations \eqref{eq-AKNS} as integrability conditions. Its matrix presentation provides a pair of MODOs $L$, $B$ in $\mathcal{R}_{2}[D]$ for matrix coefficients with entries in the differential field $K=\mathbb{C} \langle u, v\rangle$,  where $u$ and $v$ satisfy \eqref{eq-AKNS}, see  examples \ref{ex-irreducible} and \ref{ex-nonirred} of this section.
Specifically, let $L$ be the $2\times 2$ matrix
\begin{equation}\label{mat-1}
L=\imath\left[\begin{matrix}D &u\\v&-D\end{matrix}\right]   = A_0 +A_1 D, \textrm{with } 
A_0 =\imath\left[\begin{matrix}0&u\\v&0\end{matrix}\right] ,
A_1 = \imath\left[\begin{matrix}1&0\\0&-1\end{matrix}\right]
\ .
\end{equation}
Next consider the second order matrix differential operator
\begin{equation}\label{mat-2}
    B= \imath\left[\begin{matrix}-2D^2-uv&-2uD-u_x\\-2vD-v_x&2D^2+uv\end{matrix}\right]
    =
    B_0 +
    B_1 D +B_2 D^2 \ ,
\end{equation}
where
\begin{equation*}
    B_0 =\imath\left[\begin{matrix}-uv&-u_x\\-v_x&uv\end{matrix}\right]
    \ , \ 
    B_1 =
    \imath\left[\begin{matrix}0&-2u\\-2v&0\end{matrix}\right]
    \ , \ 
    B_2 =\imath\left[\begin{matrix}-2&0\\0&2\end{matrix}\right]
    \ .
\end{equation*}
This matrix $B$ is matrix $Q_2$ in \cite{GH}, p. $180$, for integration constants $c_1 =0$ and $c_2 =0$ and potentials $p=v$ and $q=-u$. The matrices \eqref{mat-1} and \eqref{mat-2}  can be found in \cite{previato} as $L_1$ and $L_2$ respectively, and in that context, $u$ and $v$ are solutions to a complexified non-linear Schr\"odinger (NLS) system, 
so that below we will take $v$ to be the complex conjugate of $u$, denoted by $u^\ast$,
\begin{equation}\label{eq-NLS}
   u''+2u^2 v =0  \quad , \quad v''+2v^2 u =0 \ .
\end{equation}
In fact, it is easy to check that the commutator of these operators is the zero order operator
\begin{equation}
    [ L , B ]= \left[\begin{matrix}
    0& -u''-2u^2 v \\v''+2v^2 u & 0
    \end{matrix} \right],
\end{equation}
which is the zero operator by \eqref{eq-NLS}.

\medskip

Next we will study the spectral problem associated with the pair of operators $L, B$. This is the coupled eigenvalue problem 
\begin{equation}\label{eq-problem}
    ( L -\lambda  I_2)Y = \overline{0}
    \quad , \quad 
    ( B -\mu I_2)Y = \overline{0}  \ .
\end{equation}
The $\BC$-ideal in this case is described by the following result.

\begin{theorem}\label{thm-2}
Let us consider commuting MODOs $L$ and $B$ in $\mathcal{R}_{2}[D]$, with $L$ of order one and invertible leading coefficient. If $B\notin C[L]$ and $f(L,B)=\m0$, then $\BC (L,B)=(f)$, for $f(\lambda,\mu)=Dres(L-\lambda,B-\mu)$.
\end{theorem}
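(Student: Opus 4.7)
The plan is to combine the degree information from Lemma \ref{lemma-1}, the sandwich inclusion from Theorem \ref{thm-ideal}, and the final piece of Corollary \ref{cor-trivial}, and then run a short case analysis on the possible factorizations of $f$.

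\medskip

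First, I would record what Lemma \ref{lemma-1} gives us for $\ell=2$: the polynomial $f(\lambda,\mu)$ is monic in $\mu$ of degree $\ell=2$, that is,
\[
f(\lambda,\mu)=\mu^{2}+a_{1}(\lambda)\mu+a_{0}(\lambda),\qquad a_{i}(\lambda)\in C[\lambda].
\]
Because $f$ is monic in $\mu$, any factorization $f=g_{1}g_{2}$ in $C[\lambda,\mu]$ must have each $g_{i}$ monic in $\mu$ (no constant $\lambda$\nobreakdash-factors can appear). Therefore there are only three possibilities for the decomposition of $f$ into irreducibles:
\begin{enumerate}
\item[(a)] $f$ is irreducible in $C[\lambda,\mu]$;
\item[(b)] $f=h_{1}h_{2}$ with $h_{1},h_{2}$ distinct irreducible factors of $\mu$\nobreakdash-degree $1$;
\item[(c)] $f=h^{2}$ with $h$ irreducible of $\mu$\nobreakdash-degree $1$.
\end{enumerate}

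\medskip

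Next I would invoke Theorem \ref{thm-ideal}, whose hypotheses are satisfied since by assumption $f(L,B)=\m0$. It yields
\[
(f)\subseteq \BC(L,B)\subseteq (f_{red}).
\]
In cases (a) and (b) we have $f=f_{red}$, and so the two inclusions force $\BC(L,B)=(f)$ at once.

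\medskip

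The only subtle case is (c), where $f=h^{2}$ and $f_{red}=h$, leaving the a priori possibility that $\BC(L,B)=(h)$ strictly larger than $(f)$. Here I would apply part~2 of Corollary \ref{cor-trivial}: since $f=h^{\ell}=h^{2}$ and $f(L,B)=\m0$, we get $\BC(L,B)=(h^{r})$ for the minimal $r\in\{1,2\}$ such that $h(L,B)^{r}=\m0$, with the additional fact that $r=1$ would force $B\in C[L]$. The hypothesis $B\notin C[L]$ therefore excludes $r=1$, so $r=2$ and $\BC(L,B)=(h^{2})=(f)$, as desired.

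\medskip

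I do not expect a genuine obstacle: the essential work was done in Lemma \ref{lemma-1}, Theorem \ref{thm-ideal} and Corollary \ref{cor-trivial}; the role of $\ell=2$ is merely to reduce the irreducible factorization of $f$ to three explicit shapes, each of which is disposed of in one line. The only point requiring a little care is making sure the ``non-reduced'' case (c) is incompatible with $B\notin C[L]$, and this is precisely what Corollary \ref{cor-trivial}(2) is designed to deliver.
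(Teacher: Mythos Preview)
Your proof is correct and follows essentially the same route as the paper's own argument: use Lemma \ref{lemma-1} to see that $f$ has $\mu$-degree $2$, apply the sandwich $(f)\subseteq\BC(L,B)\subseteq(f_{red})$ from Theorem \ref{thm-ideal} to dispose of the irreducible and two-distinct-factor cases, and in the remaining case $f=h^{2}$ invoke Corollary \ref{cor-trivial}(2) together with $B\notin C[L]$ to rule out $r=1$. Your extra remark that monicity in $\mu$ forbids pure $\lambda$-factors is a nice clarification the paper leaves implicit.
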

\begin{proof}
In the case of matrix coefficients of size $\ell=2$, the polynomial $f$ has degree $2$ in $\mu$, see Lemma \ref{lemma-1} . Hence the options are limited to: 
\begin{enumerate}
     \item By Theorem \ref{thm-ideal}, if $f$ is irreducible or has two different irreducible components then $\BC (L,B)=(f)$. 
     
    \item If $f$  is the square of a polynomial of type $h=\mu-R(\lambda)$, then by Corollary \ref{cor-trivial}, $\BC (L,B) =(h)$ if and only if $B=R(L)$. Hence, for $B$ not a polynomial in $L$, we obtain $\BC (L,B)=(f)$.
\end{enumerate}
\end{proof}

The classification of algebras $C[L,B]$ for MODOs of size $\ell=2$ follows from Theorem \ref{thm-2}: If $f$ has one irreducible component then, by Theorem \ref{thm-isocurve}, then $C[L,B]\simeq C[\lambda,\mu]/(f)$; If $f=h_1\cdot h_2$, by Theorem C, then $C[L,B]\simeq C[\lambda,\mu]/(h_1)\times C[\lambda,\mu]/(h_2)$.

\vspace{0.5cm}

The differential resultant $\Dres(L-\lambda,B-\mu)$ is the determinant of the $2\times 2$ matrix defined in \eqref{def-Dres} for $N_{\lambda}$,
\begin{align}
N_{\lambda}&=-A_1^{-1}(A_0-\lambda I_2)\\
M(L -\lambda, B-\mu)&=B_0-\mu I_2 +B_1 N_{\lambda}+B_2 (N_{\lambda}^2 +N'_{\lambda})=\\
&=
\left[\begin{matrix}
   -\imath u  v  +2
\,\imath{\lambda}^{2}-\mu&\imath u'  +2\,
u  \lambda\\ \noalign{\medskip}\imath v'  -2\,v  \lambda&\imath u  v  -2\,\imath{\lambda}^{2}-\mu 
\end{matrix}\right] .   
\end{align}
Moreover,
\begin{equation}\label{eq-spectralCurve}
    f(\lambda , \mu) = \Dres (L-\lambda,B-\mu)= {\mu}^{2}+
    4\,{\lambda}^{4}+ I_0 \lambda+ I_1 \ 
\end{equation}
where the differential polynomials $I_0 = u^{2}  v^{2}+ v'u'$ and $I_1 = -2\,i v' u  +2\,i u' v$ are first integrals of the NLS equation \eqref{eq-NLS}, since
\[
I'_0 = 2uu'v^2+2u^2vv' +v'' u' +vu'' =0  \ , \  I'_1 = -2\imath v'' u +2\imath u'' v=0.
\]
Consequently, the polynomial \eqref{eq-spectralCurve} defines a plane algebraic curve $\Gamma$ in $\mathbb{C}^2$, the spectral curve. Moreover, $f(L,B)$ is a MODO of zero order, equal to
\begin{equation}
    f(L,B)=\left[\begin{matrix}
   2v(2u^2v+u'') & 2(2u^2v+u'')'\\
   \noalign{\medskip}
   -2(2v^2u+v'')'&  2u(2v^2u+v'')
\end{matrix}\right], 
\end{equation}
which is the zero operator by \eqref{eq-NLS}.

\vspace{0.5cm}

Next we will study the spectral problem associated with the pair of operators $L, B$ at a point $P=(\lambda_0 ,\mu_0 )$. The spectral  problem  \eqref{eq-problem}
has associated the plane algebraic curve defined by $f(\lambda,\mu)=Dres(L-\lambda,B-\mu)$. Let us consider a point $P$ on the curve $\Gamma$  with $\mu_0 \not =0$ (i. e. a nonbranching point).  In particular, $P$ is a non singular point. Let $\cE$ be a Picard-Vessiot field for the system $DY=N_{\lambda_0 } Y$. Let $V_{\lambda_0 }$ be  kernel of $L -{\lambda_0 }$. It  is a $2$-dimensional $\mathcal{C}$-vector space. 
If we consider the operator $B -{\mu_0 }$ restricted to $V_{\lambda_0 }$, we obtain
\begin{equation}
\begin{array}{ll}
      (B -{\mu_0 })(\psi) =  & (B_0 -{\mu_0 } I)(\psi) +B_1 D(\psi) +B_2 D^2 (\psi)  =\\ &
      M(L-{\lambda_0 }, B-{\mu_0 })\cdot \psi \ .
\end{array}
\end{equation}
In addition, the matrix $M(L-{\lambda_0 }, B-{\mu_0 })$ has zero determinant since $f(P)=0$. Thus the linear map
\begin{equation}
   \xi: V_{\lambda_0 } \rightarrow V_{\lambda_0 }  \ , \ \xi (\psi ): = M(L-{\lambda_0 }, B-{\mu_0 })\cdot \psi ,
\end{equation}
has a nontrivial kernel $\mathcal{L}_P$,
\begin{equation}\label{eq-FiberBundel}
\mathcal{L}_P = \left\{ (\psi_1 ,\psi_2 ) \ : \ 
\left(-\imath u  v  +2
\,\imath{\lambda_0 }^{2}-{\mu_0 }\right) \psi_1 + \left( \imath u'  +2\,
u  {\lambda_0 } \right) \psi_2 =0 \ 
\right\} \ .
\end{equation}

This kernel $\mathcal{L}_P$ defines the one dimensional $C$-linear space of the common solutions of the linear differential systems $LY=\lambda_0 Y$ , $BY = \mu_0 Y$, and the rational function in the fraction field of $K[\lambda,\mu]/(f)$
\begin{equation}\label{eq-phi}
\phi (\lambda , \mu ,u,v) = -\frac{-\imath u  v  +2
\,\imath{\lambda}^{2}-\mu}{\imath u'  +2\,
u  \lambda}  \quad \textrm{satisfies } \phi_P : = \phi (P ,u,v)=\frac{\psi_2 }{\psi_1 }    
\end{equation}
as $(\lambda-\lambda_0 ,\mu-\mu_0 )/(f)$ is a maximal ideal of the ring $K[\lambda,\mu]/(f)$ . Moreover $\phi_P$ satisfies the Riccati-type equation $\phi_P'-u \phi_P^2 -2 \imath \lambda_0 \phi_P - v=0$, since
\[
\phi'-u \phi^2 -2 \imath \lambda \phi - v =-u \cdot f(\lambda ,\mu) \ ,
\]
in total agreement with \cite{GH}, formula  (3.62) with $p=v$ and $q=-u$. 

\begin{ex}\label{ex-irreducible}
If we consider $K=\mathbb{C}(e^{2\imath x})$ and the NLS potentials
$
u(x)=e^{-2\imath x}
$, $ v(x)= 2 e^{2\imath x} $, then
\[
L=\imath\left[\begin{matrix}D &e^{-2\imath x}\\2 e^{2\imath x}&-D\end{matrix}\right] \ , \ 
B= \imath\left[\begin{matrix}-2D^2-2&-2e^{-2\imath x}D+2\imath e^{-2\imath x}\\-4 e^{2\imath x}D-4\imath e^{2\imath x}&2D^2+2\end{matrix}\right] .
\]
Consequently, the spectral curve is 
$
 f(\lambda , \mu )= \mu^2 +4 (\lambda+1)^2 (\lambda^2-2\lambda+3) =0
$.  Its branching points are obtained for $\lambda_0 = -1 , 1+\imath \sqrt{2}, 1-\imath \sqrt{2}$. Observe that this curve is an irreducible  singular curve. The algebra $C[L,B]$ is isomorphic to the domain $\frac{C[\lambda , \mu ] }{(f)}$. The common solution of the coupled spectral problem \eqref{eq-problem}  at a nonbranching point $P=(\lambda_0 , \mu_0 )$ is
 \[
 \Psi= \begin{pmatrix}
 1 \\ \phi_P
 \end{pmatrix}\  \textrm{ with } \phi_P =\displaystyle -\frac{-2\imath +2\imath \lambda_0^2 -\mu_0 }{2+2\lambda_0 }\cdot  e^{2 \imath x }\ .
 \]
 \end{ex}

\begin{ex}\label{ex-nonirred}

Next consider $K=\mathbb{C}(x)$ and the NLS potentials
$u(x)=x$ and $v(x)=0$. Then $f(\lambda , \mu )= \mu^2 +4\lambda^4$ and $\phi=-\frac{2i\lambda^2-\mu}{i+2x\lambda}$. Observe that in this case the branching point is $P=(0,0)$.

We know that $f(L,B)=\m0$ but one can easily check that none of the irreducible components of $f$, namely $h_1 (\lambda,\mu )=\mu -2i\lambda^2$ nor $h_2 (\lambda,\mu )=\mu +2i\lambda^2$ are BC polynomials for the pair $L,B$. The decomposition \eqref{eq-decomposition} in Theorem C gives the ring structure of $C[L,B]\simeq C[\lambda ,\mu ]/(\mu^2 +4\lambda^4 ) $ as the product of $C[\lambda ,\mu ]/(h_i )$. In other words, for a polynomial $g\in \mathbb{C}[\lambda ,\mu]$ we have 
\[
g(L,B)=\m0 \Longleftrightarrow h_1 | g \quad \textrm{and}\quad 
h_2 | g .
\]
\end{ex}

\subsection*{Acknowledgements} 
S.L.~Rueda is partially supported by Research Group ``Modelos ma\-tem\'aticos no lineales''.
M.A.~Zurro is partially supported by Grupo UCM 910444. S.L.~Rueda and M.A.~Zurro partially supported by the grant PID2021-124473NB-I00, ``Algorithmic Differential Algebra and Integrability" (ADAI)  from the Spanish MICINN.

The authors would like to thank the anonymous referees for their insightful suggestions to improve the final version of this article.
\appendixpage

\appendix

\section{Differential Algebra Complements}\label{sec-DiffAlg}

Let $\mathbb{N}$ be the set of positive integers including $0$.
For concepts in differential algebra we refer the reader to \cite{VPS}, \cite{CHaj},  or \cite{Morales}.

A differential ring is a ring $R$ with a derivation $\partial$ on $R$. A differential ideal $I$ is an ideal of $R$ invariant under the derivation. A differential ring $R$ is called a simple differential ring if it has  no proper non zero differential ideals. We denote by
\begin{equation*}
    Const(R) = \{ r\in R \ | \ \partial (r) =0\} \ ,
\end{equation*}
which is called the ring of constants of $R$. 
Assuming that $R$ is a differential domain, its field of fractions $Fr (R)$ is a differential field with extended derivation 
$$\partial (f/g) = ( \partial (f)g-f\partial (g))/g^2.$$
A differential field $(K, \partial )$ is a differential ring which is a field. Given $a\in K$ we denote $\partial(a)$ by $a'$. Note that $Const(K)$ is a field whenever $K$ is. We assume that $C:=Const(K)$ has characteristic $0$.

Let $\alpha$ be an algebraic element over $K$. The derivation $\partial$ of $K$ can be extended to $K(\alpha )$, the minimum field generated by $\alpha$ and $K$, with extended derivation
\begin{equation}\label{der-alg-element}
\partial (\alpha ) = 
-\frac{P^{(d)}(\alpha )}{P' (\alpha )},
\end{equation}
where $P(T) \in K[T]$ is the minimal polynomial of $\alpha$, $P^{(d)}(T)$ denotes the polynomial obtained from $P(T)$ by derivating each one of its coefficients, and $P'(T)$ stands for the formal derivative of $P(T)$ with respect to the variable $T$. In particular, if $\alpha$ is algebraic over $C$ then $\alpha$ is a constant of $K$.

Moreover, we can extend the derivation $\partial$ of $K$  to an algebraic closure $\overline{K}^{alg}$ of $K$ using \eqref{der-alg-element}. Then, this algebraic closure has an algebraically closed field of constants, since
\begin{equation}\label{eq:ctes-alg-closed}
    Const( \overline{K}^{alg} ) = \overline{C}^{alg}.
\end{equation}
See for intastance \cite{Bron}, Corollary 3.3.1.

\medskip

For an ordinary differential system of equations, the above algebraic theory can be applied as follows. Let us fix $\ell\in\mathbb{N}$, $\ell\not=0$. {The derivation $\partial$ is extended to a derivation $D$ in the ring $M_{\ell}(K)$ of matrices with coefficients in $K$, as follows. Given $A=(a_{\alpha,\beta})\in M_{\ell}(K) $ then $D(A):=A'$, with $A'=(a'_{\alpha,\beta})$.}  Consider  an ordinary  differential system
\begin{equation}\label{eq-F-diff-sys}
    DY=AY  \ , \ \textrm{with } A \in {{M}_{\ell}\left( K\right)}\ ,
\end{equation}
and $ \ Y=(y_1 , \dots ,y_\ell )^t  $, $ \ DY=(y'_1 , \dots ,y'_\ell )^t  $.

{Let $R$ be a differential ring containing the differential field
$K$} and having ${C}$ as its field of constants.  A matrix $\Phi \in M_\ell (R )$
 is called \texttt{ a fundamental matrix for the equation} \eqref{eq-F-diff-sys} if  $\Phi$ is invertible and the equality $ D\Phi=A\Phi $
holds. Furthermore, if $\Phi $ and $\Psi$ are both fundamental matrices, then, applying the  derivation, we obtain that $\Delta = \Phi^{-1} \Psi$ is a constant matrix. Consequently, $\Phi = \Psi \Delta$ for a matrix $\Delta \in M_\ell ( {C} )$.

The following definition establishes the necessary requirements so that a differential field contains the solutions of the given differential system, and that it is the smallest differential field with this property keeping the field of constants fixed.

\begin{definition}
\texttt{A Picard-Vessiot ring over $K$ for the equation \eqref{eq-F-diff-sys}}, is a differential ring $R$ over $K$ satisfying:
\begin{enumerate}
    \item $R$ is a simple differential ring.
    \item There exists a fundamental matrix $\Psi$ for \eqref{eq-F-diff-sys} with coefficients in $R$, i.e., the matrix $\Psi \in  GL_\ell (R)$ satisfies $\Psi' = A\Psi$.
    \item $R$ is generated as a ring by $K$, the entries of a fundamental matrix $\Psi$ and the inverse of the determinant of $\Psi$.
\end{enumerate}
 Its fraction field $\Sigma$ is called the \texttt{ Picard-Vessiot field} of this differential system.
\end{definition}

\begin{remark}\label{rem-E-appendix}
Observe that any Picard-Vessiot ring $R$ for the equation \eqref{eq-F-diff-sys} is a domain, since $R$ has no proper maximal differential ideals. See \cite{VPS}, Proposition 1.20. Moreover, assuming that the field of constants $C$ is algebraically closed, a classical theory (Picard-Vessiot Theory) guaranties the existence and uniqueness of the Picard-Vessiot field for the equation \eqref{eq-F-diff-sys}. See \cite{VPS}, Proposition 1.22.

\end{remark}

\medskip

In this work we apply the previous considerations to the following framework.

\medskip

Let $\lambda$ and $\mu$ be  algebraic variables  with respect to $\partial$. Thus $\partial \lambda = 0$ and $\partial \mu = 0$. The derivation $\partial$ of $K$ can be extended to the polynomial ring $K[\lambda, \mu]$, and then $(K[\lambda, \mu], \partial)$ is a differential ring whose ring of constants is $(C[\lambda, \mu], \partial)$. 

We define $\mathcal{F}$ to be the differential field $\mathcal{F}=Fr(K[\lambda, \mu ]) = K(\lambda, \mu )$ and $\overline{\mathcal{F}}$ an algebraic closure of $\mathcal{F}$. By formula \eqref{eq:ctes-alg-closed} applied to the differential field  $\mathcal{F}$, the field of constants     of $\overline{\mathcal{F}} $ is algebraically closed and equal to
\begin{equation}\label{eq-constants-cl-appenddix}
      \mathcal{C} :=\overline{Const (\mathcal{F} )}^{alg}  .
\end{equation}

\medskip

Given a differential operator $L$  in $M_{\ell}(K)[D] $, 
we consider the spectral problem
\begin{equation*}
    LY=\lambda  Y\quad , 
    \quad  Y=(y_1,\ldots ,y_{\ell})^t .
\end{equation*}
This spectral problem can be studied   for $L=A_0+A_1 D$,
assuming that $A_1$ is invertible.
Let $N_{\lambda}$ be the matrix  $N_{\lambda}=-A_1^{-1}(A_0-I_{\ell}\lambda )$ in $M_{\ell} (K[\lambda , \mu ])$, and consider  the differential system
\begin{equation}\label{eq-PV-system_lambda-appendix}
    DY= N_\lambda Y \quad \textrm{ with } \ N_{\lambda} \in M_\ell (\overline{\mathcal{F}} ).
\end{equation}
Let $\mathcal{E}$ be a Picard-Vessiot extension of $\overline{\mathcal{F}}$ with field of constants $\mathcal{C}$, for the differential system \eqref{eq-PV-system_lambda-appendix}, see Remark \ref{rem-E-appendix}. Then, there exits a fundamental matrix $\Psi$  such that 
\begin{equation}
D\Psi = N_\lambda \Psi    
\quad , \quad 
\textrm{ with } \Psi \in M_{\ell} (\mathcal{E}).
\end{equation}
Furthermore, the polynomial ring $C[\lambda , \mu ]$ can be recovered from $\mathcal{C} $ as read in formula \eqref{eq-cts-basicas}.

\begin{lemma}\label{lemma-para-Previato}
The polynomial ring $C[\lambda , \mu ]$ equals
\begin{equation}\label{eq-cts-basicas}
 C[\lambda , \mu ] =    K[\lambda, \mu ] \cap \mathcal{C} \ .
\end{equation}

\end{lemma}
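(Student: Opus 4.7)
The plan is to verify the two inclusions separately. For the inclusion
\[
C[\lambda,\mu]\subseteq K[\lambda,\mu]\cap \mathcal{C},
\]
the membership in $K[\lambda,\mu]$ is clear from $C\subseteq K$. For the membership in $\mathcal{C}$, I would recall that $\lambda,\mu$ are algebraic variables with respect to $\partial$, so $\partial\lambda=\partial\mu=0$, and that the derivation on $K[\lambda,\mu]$ is the natural $K$-linear extension. Hence for every $c\in C$ and every monomial $\lambda^i\mu^j$ one has $\partial(c\lambda^i\mu^j)=0$, so $C[\lambda,\mu]\subseteq Const(K[\lambda,\mu])\subseteq Const(\mathcal{F})\subseteq \mathcal{C}$.

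For the reverse inclusion $K[\lambda,\mu]\cap \mathcal{C}\subseteq C[\lambda,\mu]$, I would take an arbitrary element
\[
p=\sum_{i,j} a_{ij}\,\lambda^{i}\mu^{j}\in K[\lambda,\mu]\cap\mathcal{C},\qquad a_{ij}\in K,
\]
and argue that every coefficient $a_{ij}$ belongs to $C$. Since $p\in \mathcal{C}$, by the definition \eqref{eq-constants-cl-appenddix} together with the identification of $\mathcal{C}$ as the field of constants of $\overline{\mathcal{F}}$ given by \eqref{eq:ctes-alg-closed}, the element $p$ is annihilated by the (unique) extension of $\partial$ from $K$ to $\overline{\mathcal{F}}$. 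When restricted to the differential subring $K[\lambda,\mu]$, this extension coincides with the $K$-linear derivation determined by $\partial\lambda=\partial\mu=0$; therefore
\[
0=\partial p=\sum_{i,j} a_{ij}'\,\lambda^{i}\mu^{j}.
\]

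The final step is to exploit the algebraic independence of $\lambda,\mu$ over $K$: the monomials $\{\lambda^i\mu^j\}$ form a $K$-basis of $K[\lambda,\mu]$, so the above equation forces $a_{ij}'=0$ for all $i,j$, i.e.\ $a_{ij}\in C$ and hence $p\in C[\lambda,\mu]$. I do not foresee a serious obstacle here; the only delicate point to spell out is the compatibility of the derivations—namely that the extension of $\partial$ from $K$ to $\overline{\mathcal{F}}$, when restricted back to $K[\lambda,\mu]$, is the same derivation used to build that ring—which follows from the uniqueness of the extension of a derivation to an algebraic closure recorded via formula \eqref{der-alg-element}.
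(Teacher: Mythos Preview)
Your proposal is correct and follows essentially the same approach as the paper: both directions of the inclusion are checked, and the nontrivial one is obtained by observing that an element of $K[\lambda,\mu]\cap\mathcal{C}$ is a constant of $K[\lambda,\mu]$, whose ring of constants is $C[\lambda,\mu]$. You simply spell out this last fact explicitly via the coefficient-by-coefficient argument (and address the compatibility of derivations), whereas the paper states it in one line.
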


\begin{proof}
Obviously we have  $C[\lambda , \mu ] \subset    K[\lambda, \mu ] \cap \mathcal{C}$. But the converse is also true since the ring of constants of $K[\lambda, \mu ] $ is $C[\lambda , \mu ] $. Consequently we obtain the required equality.
\end{proof}

\bibliographystyle{acm}

\bibliography{bibliography.bib}
\end{document}